\numberwithin{equation}{section}
\newcommand{\set}[1]{\left\{ #1 \right\}}
\newcommand{\ra}{\rightarrow}
\newcommand{\isom}{\xrightarrow{\sim}}
\newcommand{\map}[5]{#1: \xymatrix@R=0in{#2 \ar[r] & #3 \\ #4 \ar@{|->}[r] & #5}}
\renewcommand{\varphi}{\phi}
\newcommand{\Hom}{\mathrm{Hom}}
\newcommand{\Spec}{\mathrm{Spec} \,}
\newcommand{\alg}[1]{\overline{#1}}
\newcommand{\sep}[1]{{#1}^{\rm{s}}}
\newcommand{\Gal}{\mathrm{Gal}}
\newcommand{\ad}{\mathrm{ad}}
\newcommand{\Z}{\mathbb{Z}}
\newcommand{\Q}{\mathbb{Q}}
\newcommand{\R}{\mathbb{R}}
\newcommand{\GL}{\mathrm{GL}}
\newcommand{\gl}{\mathfrak{gl}}
\DeclareSymbolFont{cyrletters}{OT2}{wncyr}{m}{n}
\DeclareMathSymbol{\Sha}{\mathalpha}{cyrletters}{"58}
\theoremstyle{plain}
\newtheorem{theorem}{Theorem}
\newtheorem*{theorem*}{Theorem}
\newtheorem{corollary}[theorem]{Corollary}
\newtheorem{lemma}[theorem]{Lemma}
\theoremstyle{definition}
\newtheorem{remark}[theorem]{Remark}
\numberwithin{equation}{section}
\numberwithin{theorem}{section}
\begin{document}

\title{Integral points on elliptic curves and Modularity}
\author{Federico Amadio Guidi}
\date{}
\address{Mathematical Institute \\ University of Oxford \\ Oxford, UK}
\email{federico.amadio@maths.ox.ac.uk}
\subjclass[2010]{14G05, 11G05, 11F80.}

\maketitle

\begin{abstract}
In this paper we prove the finiteness of the set of $S$-integral points of a punctured rational elliptic curve without complex multiplication using the Chabauty-Kim method. This extends previous results of Kim \cite{kim10} in the complex multiplication case. The key input of our approach is the use of modularity techniques to prove the vanishing of certain Selmer groups involved in the Chabauty-Kim method.
\end{abstract}

\section*{Introduction}

Let $E$ be an elliptic curve with no complex multiplication over $\Q$, and let $X$ be the hyperbolic genus $1$ curve over $\Q$ obtained by removing the origin from $E$. Let $\mathcal{E}$ be a Weierstrass minimal model for $E$, and let $\mathcal{X}$ be the integral model of $X$ obtained as the complement in $\mathcal{E}$ of the Zariski closure of the origin. Let $S$ be a finite set of rational primes including $\infty$ and the primes of bad reduction for $\mathcal{E}$. We consider the set $\mathcal{X} (\Z_S)$ of $S$-integral points of $X$.

A classical theorem of Siegel, see for instance \cite{lang83}, states that $\mathcal{X} (\Z_S)$ is finite. In this paper, we reprove this finiteness result using the Chabauty-Kim method.

Given a prime $p$ of good reduction for $\mathcal{E}$, this method produces a nested sequence
\[ \mathcal{X}(\Z_p) \supset \mathcal{X} (\Z_p)_1 \supset \mathcal{X} (\Z_p)_2 \supset \cdots \supset \mathcal{X} (\Z_p)_n \supset \cdots \supset \mathcal{X} (\Z_S) \]
of sets of $p$-adic points, each containing $\mathcal{X} (\Z_S)$. A precise description of these sets is given in \S \ref{Chabauty-Kim}. The main result of this paper is the following.

\begin{theorem*} [{see Corollary \ref{main_corollary}}]
The sets $\mathcal{X} (\Z_p)_n$ are finite for $n$ sufficiently large.
\end{theorem*}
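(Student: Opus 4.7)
The plan is to combine the general dimension-counting strategy of Kim's nonabelian Chabauty theory with new input coming from the modularity of $E$. As will be developed in $\S$\ref{Chabauty-Kim}, finiteness of $\mcal{X}(\Z_p)_n$ is a consequence of a strict inequality
\begin{equation*}
\dim_{\Q_p} H^1_f\bigl(G_{\Q,S},\, U_n^{\mathrm{Lie}}\bigr) \;<\; \dim_{\Q_p} U_n^{\mathrm{dR}}/F^0 U_n^{\mathrm{dR}},
\end{equation*}
where $U_n$ is the depth-$n$ quotient of the pro-unipotent \'etale fundamental group $U$ of $X_{\alg{\Q}}$. Indeed, under such a bound the localization map from the global Selmer variety into the local $p$-adic Hodge-theoretic target fails to be dominant, so $\mcal{X}(\Z_p)_n$ lies in the zero set of a non-zero $p$-adic analytic function on the compact $p$-adic manifold $\mcal{X}(\Z_p)$, and is therefore finite.

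Both sides can be analysed via the descending central series of $U^{\mathrm{Lie}}$. Because $X = E \setminus \{O\}$ is a once-punctured genus-$1$ curve, $U$ is free pro-unipotent on a $G_\Q$-module canonically isomorphic to $V_p E$, and the graded pieces $\gr^m U^{\mathrm{Lie}}$ are the degree-$m$ components of the corresponding free Lie algebra. By Witt's formula their dimensions grow like $2^m/m$, and since essentially half of each graded piece sits outside $F^0$ on the de Rham side, the local dimension $\dim U_n^{\mathrm{dR}}/F^0$ already grows exponentially in $n$.

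On the global side, the Weil pairing identifies $\wedge^2 V_p E \cong \Q_p(1)$, so after semisimplification every Jordan-H\"older constituent of $\gr^m U^{\mathrm{Lie}}$ is isomorphic to a twist $\Sym^a V_p E(b)$ with $0 \leq a \leq m$ and $b \in \Z$ determined by the combinatorics of the free Lie algebra on two generators. Hence
\begin{equation*}
\dim_{\Q_p} H^1_f(G_{\Q,S},\, U_n^{\mathrm{Lie}}) \;\leq\; \sum_{a,b} m_{a,b}(n)\cdot \dim_{\Q_p} H^1_f\bigl(G_{\Q,S},\, \Sym^a V_p E(b)\bigr),
\end{equation*}
with explicit multiplicities $m_{a,b}(n)$. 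The contributions from $a = 0$ and $a = 1$ are the classical cases already handled in Chabauty or CM-Chabauty: the former reduces to Soul\'e-type vanishing for $\Q_p(b)$, while the latter is controlled by the Mordell-Weil rank of $E$ together with standard results on its quadratic twists, and grows only polynomially in $n$.

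The main obstacle is therefore the vanishing of $H^1_f(G_{\Q,S},\, \Sym^a V_p E(b))$ for $a \geq 2$, and this is where the non-CM hypothesis forces us to invoke modularity of $E$. Via the symmetric power functoriality of Newton-Thorne, each $\Sym^a V_p E$ is realized as the $p$-adic Galois representation attached to a cuspidal automorphic representation of $\GL_{a+1}/\Q$, so one may appeal to the known cases of the Bloch-Kato conjecture for symmetric powers of modular forms (Flach and Diamond-Flach-Guo for $a = 2$, together with the higher-symmetric-power results now available in the critical range) to obtain the required Selmer vanishing for the relevant weights. Granting these vanishings, the right-hand side of the displayed inequality grows at most polynomially in $n$ while the left-hand side grows exponentially, so the dimension inequality holds for $n \gg 0$ and the corollary follows. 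The delicate bookkeeping consists in matching precisely which pairs $(a,b)$ arise in the graded pieces of $U^{\mathrm{Lie}}$ and checking that each is covered by an available Selmer-vanishing statement coming from modularity.
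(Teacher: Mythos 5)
Your proposal diverges from the paper at the very first structural choice: you work with the full pro-unipotent fundamental group $U$, whereas the paper passes to the maximal \emph{metabelian} quotient $W = U / [U^2, U^2]$ before doing any counting. This is not a cosmetic difference, and the route you propose has a genuine gap.

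The gap is in the asserted asymptotics. You claim that, granting Bloch--Kato vanishing for the constituents $\mathrm{Sym}^a V_p E (b)$ with $a \geq 2$, the global Selmer dimension $\dim H^1_f(G_{\Q,T}, U_n)$ grows only polynomially while $\dim U_n^{\mathrm{dR}}/F^0$ grows exponentially. This is false. After vanishing for $a \geq 2$, the surviving contributions come from $a = 0$ and $a = 1$, i.e.\ from the twists $\Q_p(b)$ and $V_p E \otimes \epsilon^{b}$ occurring in $\gr^m U \cong L_m(V_p E)$. For a rank-two module the multiplicity of the determinant power $\det^{m/2} = \Q_p(m/2)$ in $L_m$ is governed by Catalan-type counts and is asymptotically $\sim 2^m / m^{5/2}$ (one can check directly: the multiplicities for $m = 6, 8, 10, 12, 14, 16$ are $1, 1, 5, 9, 33, 85$). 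For $b$ odd and $\geq 3$ we have $\dim H^1_f(G_{\Q,T}, \Q_p(b)) = 1$, so these terms alone push $\dim H^1_f(G_{\Q,T}, U_n)$ into the \emph{same} exponential growth regime as the de Rham side. The tidy ``polynomial vs.\ exponential'' separation you invoke does not exist, and the inequality of Lemma \ref{Diophantine_finiteness_lemma} does not follow from it.

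There is a second, independent issue: your appeal to Bloch--Kato vanishing for $\mathrm{Sym}^a V_p E (b)$, $a \geq 2$, for ``all pairs $(a,b)$ arising in the graded pieces'' is not backed by the cited results. Diamond--Flach--Guo and the related literature handle $\mathrm{Sym}^2$ in specific twist ranges, and symmetric-power functoriality by itself does not deliver Selmer vanishing for arbitrary Tate twists. The paper sidesteps exactly this: in the metabelian quotient $W$ each graded piece $W^{\mathrm{\acute et}, n+1}\backslash W^{\mathrm{\acute et}, n}$ is a \emph{single} irreducible $\mathrm{Sym}^{n-2} V_p E \otimes \epsilon$ (Lemma \ref{lemma_quotient}), so only one twist per level needs to be controlled. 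That twist (after Cartier duality and a reduction via $\mathrm{Sym}^2(\mathrm{Sym}^k V)$) is exhibited as a direct summand of $\mathrm{ad}(\mathrm{Sym}^k V|_{G_F})$ for an imaginary quadratic $F$, and the vanishing of $H^1_f$ for the \emph{adjoint} of an automorphic representation is precisely what Newton--Thorne \cite{NT19} prove (combined with the potential automorphy of \cite{allen_et_al18}). The price of the metabelian reduction is that both sides of the inequality grow only quadratically in $n$ (roughly $n^2/2$ on the de Rham side against $n^2/4$ on the Selmer side), so the final comparison in the proof of Theorem \ref{main_theorem} is a genuinely close call rather than an order-of-magnitude separation. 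If you want to salvage the full-$U$ route you would need, at minimum, a sharper analysis of the multiplicities of the $a=0,1$ constituents that actually matches the de Rham count term by term, and a Selmer vanishing statement covering all twists appearing in the free Lie algebra; neither is supplied in the proposal.
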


This clearly implies the finiteness of $\mathcal{X} (\Z_S)$. Moreover, note that combining this with the results of \cite{kim10} in the complex multiplication case we get finiteness of the set of $S$-integral points of any rational elliptic curve minus the origin via the Chabauty-Kim method.

The key advantage of proving a finiteness result of this kind using the Chabauty-Kim method is that the finiteness of $\mathcal{X} (\Z_p)_n$, and hence of $\mathcal{X} (\Z_S)$, is obtained by showing that $\mathcal{X} (\Z_p)_n$ is contained inside the zero locus of a $p$-adic analytic function which can be described in terms of $p$-adic iterated integrals, see \cite{kim09}. This zero locus turns out to be finite, and hence an effective bound on its cardinality naturally yields an effective bound on the cardinality of $\mathcal{X} (\Z_S)$.

The structure of this paper is the following. In \S \ref{Chabauty-Kim} we give a quick introduction to the Chabauty-Kim method, and state our main finiteness theorem. In \S \ref{Block-Kato_Selmer} we prove the key vanishing result for Bloch-Kato Selmer groups using modularity techniques. From this, using some Iwasawa theory, we deduce in \S \ref{Selmer} a vanishing result for Selmer groups, which will allow us to prove the main theorem in \S \ref{proof_main_theorem}.

\subsection*{Acknowledgements}

The author would like to thank Minhyong Kim and Andrew Wiles for their very helpful comments, suggestions, and feedbacks. The author would also like to thank Laura Capuano for checking a preliminary version of this manuscript.

\subsection*{Notation}

For a field $M$, we fix an algebraic closure $\alg{M}$ of $M$, a separable closure $\sep{M}$ of $M$ inside $\alg{M}$, and we let $G_M = \Gal (\sep{M} / M)$ be the absolute Galois group of $M$. If $M$ is an algebraic extension of $\Q$, and $T$ is a finite set of places of $M$, we let $M_T$ be the maximal extension of $M$ inside $\alg{M}$ which is unramified outside $T$, and we let $G_{M, T} = \Gal (M_T / M)$. Given a prime $p$, for every integer $n \geq 1$ we denote by $\zeta_{p^n}$ a primitive $p^n$-th root of unity in $\sep{M}$, we let $M(\zeta_{p^\infty}) = \cup_{n \geq 1} M(\zeta_{p^n})$, and we denote by $\epsilon$ the $p$-adic cyclotomic character of $G_M$. If $M'$ is a separable quadratic extension of $M$, we let $\delta_{M'/M}$ denote the nontrivial character of $\Gal (M'/M)$. Moreover, for an algebraic variety $X$ over $M$, and an extension $M'$ of $M$, we let $X_{M'} = X \times_{\Spec M} \Spec M'$.

Given a profinite group $G$ and a topological abelian group $V$ with a continuous action of $G$, we denote by $H^i (G, V)$ the continuous group cohomology, and by $h^i (G, V)$ its rank.

\section{The Chabauty-Kim method} \label{Chabauty-Kim}

In his papers \cite{kim05} and \cite{kim09}, Kim introduces a nonabelian analogue of the method of Chabauty-Coleman, often called \emph{Chabauty-Kim method} or \emph{nonabelian Chabauty}. We give here a quick introduction to it, roughly following the exposition of \cite{kim09}.

Let $X$ be a smooth curve over $\Q$, and let $X'$ be a smooth projective genus $g \geq 0$ curve over $\Q$ with $X \subset X'$ and $D = X' \setminus X$. Assume that $X$ is hyperbolic, that is $2g -2 + \# D(\alg{\Q}) > 0$. Also, assume that we are given smooth models $\mathcal{X}'$ of $X'$ and $\mathcal{D}$ of $D$ over $\Z_S$, for $S$ some finite set of primes, and let $\mathcal{X} = \mathcal{X}' \setminus \mathcal{D}$.

The Chabauty-Kim method studies the set of $S$-integral points $\mathcal{X}(\Z_S)$ of $\mathcal{X}$ through certain \emph{motivic unipotent Albanese maps}. Roughly speaking, the idea of these maps is to fix an $S$-integral base point $b$, choose a motivic unipotent fundamental group $U$ of $X$ with base point $b$, and map any other point $x$ to the class of the motivic $U$-torsor of paths from $b$ to $x$ in a suitable classifying space. In \cite{kim05} and \cite{kim09}, Kim considers the de Rham and the \'etale realisations of the fundamental group.

For the rest of this section, let us fix an $S$-integral base point $b$, a prime $p$ of good reduction for $\mathcal{X}'$, and an embedding of $\alg{\Q}$ inside $\alg{\Q}_p$. Also, let us set $T = S \cup \set{p}$.

Let $U^{\mathrm{dR}} = \pi_1^{\mathrm{dR}, \Q_p} (X_{\alg{\Q}_p}, b)$ be the $\Q_p$-pro-unipotent de Rham fundamental group of $X_{\alg{\Q}_p}$ with base point $b$. Let $U^{\mathrm{dR}, n}$ be the descending central series of $U^{\mathrm{dR}}$, and consider the finite dimensional quotients $U_n^{\mathrm{dR}} = U^{\mathrm{dR}, n + 1} \backslash U^{\mathrm{dR}}$ of $U^{\mathrm{dR}}$. Note that $U^{\mathrm{dR}}$ is naturally endowed with a decreasing Hodge filtration
\[ U^{\mathrm{dR}} \supset \cdots \supset F^i U^{\mathrm{dR}} \supset F^{i+1} U^{\mathrm{dR}} \supset \cdots \supset F^0 U^{\mathrm{dR}}, \]
which in turn induces decreasing filtrations on each $U_n^{\mathrm{dR}}$. The quotient $U^{\mathrm{dR}} / F^0 U^{\mathrm{dR}}$ naturally classifies de Rham path spaces, see \cite{kim09}, hence the \emph{de Rham unipotent Albanese map} can be defined to be the map
\[ j^{\mathrm{dR}} : \mathcal{X} (\Z_p) \ra U^{\mathrm{dR}} / F^0 U^{\mathrm{dR}} \]
which sends each $x$ to the class of the $U^{\mathrm{dR}}$-torsor of de Rham paths from $b$ to $x$. By passing to the quotients $U_n^{\mathrm{dR}}$, we get finite level versions
\[ j^{\mathrm{dR}}_n : \mathcal{X} (\Z_p) \ra U_n^{\mathrm{dR}} / F^0 U_n^{\mathrm{dR}}, \]
which fit into a natural compatible tower. Kim proves, see \cite[Theorem 1]{kim09}, that for each $n \geq 2$ the image of $j_n^{\mathrm{dR}}$ is Zariski dense. This property is of crucial importance for applications to Diophantine finiteness.

Let us now move to the \'etale side of the picture. Let $U^{\mathrm{\acute{e}t}} = \pi_1^{\mathrm{\acute{e}t}, \Q_p} (X_{\alg{\Q}}, b)$ be the $\Q_p$-pro-unipotent \'etale fundamental group of $X_{\alg{\Q}}$ with base point $b$, let $U^{\mathrm{\acute{e}t}, n}$ be the descending central series of $U^{\mathrm{\acute{e}t}}$, and let $U_n^{\mathrm{\acute{e}t}} = U^{\mathrm{\acute{e}t}, n + 1} \backslash U^{\mathrm{\acute{e}t}}$. In \cite{kim09}, Kim defines the \emph{Selmer variety} to be the provariety
\[ H^1_f (G_{\Q, T}, U^{\mathrm{\acute{e}t}}) \]
that classifies $G_{\Q, T}$-equivariant $U^{\mathrm{\acute{e}t}}$-torsors that are unramified outside $T$, and crystalline at $p$, and the \emph{\'etale unipotent Albanese map} to be the map
\[ j^{\mathrm{\acute{e}t}} : \mathcal{X} (\Z_S) \ra H^1_f (G_{\Q, T}, U^{\mathrm{\acute{e}t}}) \]
given by sending each $x$ to the class of the $G_{\Q, T}$-equivariant $U^{\mathrm{\acute{e}t}}$-torsor of \'etale paths from $b$ to $x$. Also in this case, we have finite level versions
\[ j^{\mathrm{\acute{e}t}}_n : \mathcal{X} (\Z_S) \ra H^1_f (G_{\Q, T}, U_n^{\mathrm{\acute{e}t}}). \]

Moreover, we also have local maps
\[ j^{\mathrm{\acute{e}t}}_{n, v} : \mathcal{X} (\Z_v) \ra H^1 (G_{\Q_v}, U_n^{\mathrm{\acute{e}t}}) \]
for each prime $v \neq p$, and
\[ j^{\mathrm{\acute{e}t}}_{n, p} : \mathcal{X} (\Z_p) \ra H^1_f (G_{\Q_p}, U_n^{\mathrm{\acute{e}t}}), \]
where $H^1_f (G_{\Q_p}, U_n^{\mathrm{\acute{e}t}})$ classifies $G_{\Q_p}$-equivariant $U_n^{\mathrm{\acute{e}t}}$-torsors that are crystalline.

A result of Kim and Tamagawa, see \cite[Corollary 0.2]{KT08}, gives that the maps $j^{\mathrm{\acute{e}t}}_{n, v}$, for $v \neq p$, have finite image. Following \cite{BDCKW18}, we define
\[ \mathcal{X} (\Z_p)_n = (j^{\mathrm{\acute{e}t}}_{n, p})^{-1} (\mathrm{loc}_p (\cap_{v \neq p} \mathrm{loc}_v^{-1}(\mathrm{im} \, j^{\mathrm{\acute{e}t}}_{n, v}))), \]
where $\mathrm{loc}_v$ are the naturally defined restriction maps. These sets clearly fit into a nested sequence
\[ \mathcal{X}(\Z_p) \supset \mathcal{X} (\Z_p)_1 \supset \mathcal{X} (\Z_p)_2 \supset \cdots \supset \mathcal{X} (\Z_p)_n \supset \cdots \supset \mathcal{X} (\Z_S). \]

Finally, Kim considers a nonabelian extension of Fontaine's Dieudonn\'e functor, which gives an isomorphism of varieties:
\[ D : H^1_f (G_{\Q_p}, U_n^{\mathrm{\acute{e}t}}) \isom U_{n}^{\mathrm{dR}} / F^0 U_{n}^{\mathrm{dR}}, \]
see \cite[Proposition 1.4]{kim12}. All these maps fit into the following fundamental commutative diagram.
\[ \xymatrix{
\mathcal{X}(\Z_S) \ar@{^{(}->}[r] \ar[d]^{j^{\mathrm{\acute{e}t}}_n} & \mathcal{X}(\Z_p) \ar[dr]^{j^{\mathrm{dR}}_n} \ar[d]^{j^{\mathrm{\acute{e}t}}_{n, p}} & \\
  H^1_f (G_{\Q, T}, U_n^{\mathrm{\acute{e}t}}) \ar[r]^{\mathrm{loc}_p} & H^1_f (G_{\Q_p}, U_n^{\mathrm{\acute{e}t}}) \ar[r]^{\sim}_{D} & U_{n}^{\mathrm{dR}} / F^0 U_{n}^{\mathrm{dR}}. 
  } \]
  
The key point is that the image of $\mathcal{X}(\Z_S)$ inside $U_n^{\mathrm{dR}} / F^0 U_n^{\mathrm{dR}} $ turns out to be contained inside the image of $H^1_f (G_{\Q, T}, U_n^{\mathrm{\acute{e}t}})$. We summarise the relation to Diophantine finiteness in the following lemma\footnote{This result is presented in alternative versions in the literature, from which this version can be easily deduced. We present its proof anyway for the sake of completeness.}.

\begin{lemma} \label{Diophantine_finiteness_lemma}
If for some $n \geq 2$ we have
\[ \dim H^1_f (G_{\Q, T}, U_n^{\mathrm{\acute{e}t}}) < \dim U_{n}^{\mathrm{dR}} / F^0 U_{n}^{\mathrm{dR}}, \]
then $\mathcal{X} (\Z_p)_{n}$ is finite.
\end{lemma}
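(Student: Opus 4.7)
The plan is to exploit the fundamental commutative diagram together with Kim's Zariski density theorem for $j^{\mathrm{dR}}_n$, reducing the finiteness of $\mathcal{X}(\Z_p)_n$ to the standard fact that a nonzero $p$-adic analytic function on a $p$-adic disc has only finitely many zeros.

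First, I would chase the diagram. By the very definition of $\mathcal{X}(\Z_p)_n$ one has
\[ j^{\mathrm{\acute{e}t}}_{n,p}(\mathcal{X}(\Z_p)_n) \subset \mathrm{loc}_p(H^1_f(G_{\Q, T}, U_n^{\mathrm{\acute{e}t}})), \]
and applying the Dieudonné isomorphism $D$ and using the commutativity of the fundamental diagram yields
\[ j^{\mathrm{dR}}_n(\mathcal{X}(\Z_p)_n) \subset D(\mathrm{loc}_p(H^1_f(G_{\Q, T}, U_n^{\mathrm{\acute{e}t}}))). \]
Since $\mathrm{loc}_p$ is a morphism of finite-dimensional algebraic $\Q_p$-varieties, the Zariski closure of its image has dimension at most $\dim H^1_f(G_{\Q, T}, U_n^{\mathrm{\acute{e}t}})$, which by hypothesis is strictly smaller than $\dim U_n^{\mathrm{dR}}/F^0 U_n^{\mathrm{dR}}$. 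Through $D$, this produces a proper Zariski closed subvariety $V \subsetneq U_n^{\mathrm{dR}}/F^0 U_n^{\mathrm{dR}}$ containing $j^{\mathrm{dR}}_n(\mathcal{X}(\Z_p)_n)$.

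Next, since $U_n^{\mathrm{dR}}/F^0 U_n^{\mathrm{dR}}$ is affine, I would pick a nonzero regular function $f$ vanishing on $V$ and pull it back to obtain a $p$-adic analytic function $f \circ j^{\mathrm{dR}}_n$ on $\mathcal{X}(\Z_p)$ which vanishes on $\mathcal{X}(\Z_p)_n$. By Kim's Zariski density theorem \cite[Theorem 1]{kim09}, applicable since $n \geq 2$, no nonzero regular function can vanish on the entire image of $j^{\mathrm{dR}}_n$; hence $f \circ j^{\mathrm{dR}}_n \not\equiv 0$. Since $\mathcal{X}'$ is smooth and proper, $\mathcal{X}(\Z_p)$ sits inside a finite disjoint union of $p$-adic residue discs, and the restriction of $f \circ j^{\mathrm{dR}}_n$ to each disc is a convergent power series in one variable, which therefore has only finitely many zeros. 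This gives the finiteness of $\mathcal{X}(\Z_p)_n$.

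The main substantive input is Kim's Zariski density theorem for $j^{\mathrm{dR}}_n$; the rest is a formal diagram chase combined with the standard Weierstrass-preparation style finiteness of zeros of nonzero $p$-adic analytic functions. A minor technical point to address is the passage from \emph{globally nonzero} on $\mathcal{X}(\Z_p)$ to \emph{nonzero on each residue disc}, which one handles using the fact that Kim's density statement holds componentwise, a consequence of the explicit description of $j^{\mathrm{dR}}_n$ via $p$-adic iterated integrals.
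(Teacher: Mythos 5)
Your proof is correct and follows essentially the same route as the paper: use the dimension hypothesis to produce a proper Zariski closed subvariety of $U_n^{\mathrm{dR}}/F^0 U_n^{\mathrm{dR}}$ containing $j^{\mathrm{dR}}_n(\mathcal{X}(\Z_p)_n)$, pick a nonzero algebraic function $\alpha$ vanishing on it, invoke Kim's Zariski density theorem to get $\alpha \circ j^{\mathrm{dR}}_n \not\equiv 0$, and conclude finiteness from the $p$-adic Weierstrass preparation theorem. Your final remark about passing from global nonvanishing to nonvanishing on each residue disc is a legitimate technical point that the paper elides; it is indeed handled by the fact that Kim's density holds disc by disc.
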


\begin{proof}
We let $Y$ be the Zariski closure of the image of $H^1_f (G_{\Q, T}, U_n^{\mathrm{\acute{e}t}})$ inside $U_{n}^{\mathrm{dR}} / F^0 U_{n}^{\mathrm{dR}}$. Since $\dim H^1_f (G_{\Q, T}, U_n^{\mathrm{\acute{e}t}}) < \dim U_{n}^{\mathrm{dR}} / F^0 U_{n}^{\mathrm{dR}}$ , then $Y$ is a proper subset of $U_{n}^{\mathrm{dR}} / F^0 U_{n}^{\mathrm{dR}}$, and so there exists an algebraic function $\alpha \neq 0$ on $U_{n}^{\mathrm{dR}} / F^0 U_{n}^{\mathrm{dR}}$ which vanishes on $Y$. Since $j^{\mathrm{dR}}_n$ has Zariski dense image, we have that $\alpha \circ j^{\mathrm{dR}}_n \neq 0$ on $\mathcal{X} (\Z_p)$. Moreover, $\mathcal{X} (\Z_p)_n = (j^{\mathrm{dR}}_n)^{-1} (Y)$ is contained inside the zero locus of $\alpha \circ j^{\mathrm{dR}}_n$. This is finite by the $p$-adic Weierstrass preparation theorem.
\end{proof}

For the purposes of this paper, it is more convenient to replace, as in \cite{kim10} and \cite{CK10}, each motivic unipotent fundamental group $U$ with its maximal metabelian quotient, that is the quotient
\[ W = U / [U^2, U^2]. \]

Its finite dimensional quotients by the descending central series, as well as the corresponding Selmer varieties and unipotent Albanese maps are defined accordingly. Replacing $U$ with $W$ yields the same fundamental commutative diagram
\[ \xymatrix{
\mathcal{X}(\Z_S) \ar@{^{(}->}[r] \ar[d]^{j^{\mathrm{\acute{e}t}}_n} & \mathcal{X}(\Z_p) \ar[dr]^{j^{\mathrm{dR}}_n} \ar[d]^{j^{\mathrm{\acute{e}t}}_{n, p}} & \\
  H^1_f (G_{\Q, T}, W_n^{\mathrm{\acute{e}t}}) \ar[r]^{\mathrm{loc}_p} & H^1_f (G_{\Q_p}, W_n^{\mathrm{\acute{e}t}}) \ar[r]^{\sim}_{D} & W_{n}^{\mathrm{dR}} / F^0 W_{n}^{\mathrm{dR}}, 
  } \]
and Lemma \ref{Diophantine_finiteness_lemma} works verbatim.

Let us go back to our original setting, that is the genus $1$ hyperbolic curve $X$ obtained by removing the origin from an elliptic curve $E$ without complex multiplication. The main result of this paper is the following.

\begin{theorem} \label{main_theorem}
We have
\[ \dim H^1_f (G_{\Q, T}, W_n^{\mathrm{\acute{e}t}}) < \dim W_{n}^{\mathrm{dR}} / F^0 W_{n}^{\mathrm{dR}} \]
for $n$ sufficiently large.
\end{theorem}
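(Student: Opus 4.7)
The plan is to compute both sides of the inequality using the filtration of $W = U/[U^2, U^2]$ by its descending central series, and to compare their asymptotic growth in $n$. Setting $V = V_p E = T_p E \otimes \Q_p$, the structure of the free metabelian Lie algebra on a $2$-dimensional space yields $\gr^1 W^{\mathrm{\acute{e}t}} \cong V$, and for $k \geq 2$,
\[ \gr^k W^{\mathrm{\acute{e}t}} \cong \Sym^{k-2} V \otimes \wedge^2 V \cong \Sym^{k-2} V(1), \]
where the last isomorphism uses the Weil pairing $\wedge^2 V \cong \Q_p(1)$; analogous identifications hold on the de Rham side. In particular, $\dim W_n^{\mathrm{dR}} = \dim W_n^{\mathrm{\acute{e}t}} = 2 + \binom{n}{2}$.

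For the de Rham side, the key observation is that $H^1_{\mathrm{dR}}(X) \cong H^1_{\mathrm{dR}}(E)$ is a pure Hodge structure of weight $1$ with Hodge types $(1,0)$ and $(0,1)$, each of dimension $1$. A direct Hodge-theoretic calculation on each $\gr^k W^{\mathrm{dR}}$ (whose Hodge types $(p,q)$ all satisfy $p < 0$ as soon as $k \geq 2$) shows that $F^0 \gr^k W^{\mathrm{dR}} = 0$ for $k \geq 2$, while $F^0 \gr^1 W^{\mathrm{dR}}$ is one-dimensional. By strictness of the Hodge filtration, this gives
\[ \dim W_n^{\mathrm{dR}} / F^0 W_n^{\mathrm{dR}} = 1 + \binom{n}{2}, \]
which grows quadratically in $n$.

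For the arithmetic side, I would bound $\dim H^1_f(G_{\Q, T}, W_n^{\mathrm{\acute{e}t}})$ by induction on $n$ using the central extensions
\[ 1 \ra \gr^{k+1} W^{\mathrm{\acute{e}t}} \ra W_{k+1}^{\mathrm{\acute{e}t}} \ra W_k^{\mathrm{\acute{e}t}} \ra 1 \]
together with the long exact sequences of continuous cohomology (and the compatibility of the local crystalline conditions at $p$), obtaining
\[ \dim H^1_f(G_{\Q, T}, W_n^{\mathrm{\acute{e}t}}) \leq \sum_{k=1}^n \dim H^1_f(G_{\Q, T}, \gr^k W^{\mathrm{\acute{e}t}}). \]
The crucial input is then the vanishing of $H^1_f(G_{\Q, T}, \Sym^{k-2} V(1))$ for $k$ sufficiently large. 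This is exactly what is established in \S\ref{Block-Kato_Selmer} and \S\ref{Selmer}: one first proves the vanishing of the relevant Bloch-Kato Selmer groups of twisted symmetric powers of $V$ using modularity techniques applied to $E$, and then upgrades these Bloch-Kato statements to the Selmer groups appearing in the Chabauty-Kim setup by Iwasawa-theoretic arguments.

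Granting these vanishing results, the arithmetic dimension is bounded by a quantity that grows at most linearly in $n$, while the geometric dimension grows quadratically; the required strict inequality then follows for $n$ sufficiently large. The main obstacle is precisely the Bloch-Kato vanishing in the non-CM setting: the imaginary-quadratic Iwasawa theory exploited in \cite{kim10} is no longer available, and one must instead rely on the modularity-based argument developed in \S\ref{Block-Kato_Selmer}, which is the technical heart of the paper.
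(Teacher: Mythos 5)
There is a genuine gap in the arithmetic half of your argument. You claim that the upper bound $\sum_{k} \dim H^1_f(G_{\Q,T}, \gr^k W^{\mathrm{\acute{e}t}})$ grows \emph{linearly} in $n$, because (you assert) $H^1_f(G_{\Q,T}, \mathrm{Sym}^{k-2}V_p(E)\otimes\epsilon)$ vanishes for $k$ large, and you attribute this vanishing to \S\ref{Block-Kato_Selmer} and \S\ref{Selmer}. Neither part of this is what the paper establishes, and the linear-growth claim is false. Theorem~\ref{vanishing_Bloch-Kato_Selmer} proves $H^1_f(G_{\Q,T}, \mathrm{Sym}^{2m}V_p(E)\otimes\epsilon^{-m}\delta_{F/\Q}^{m+1})=0$ and Theorem~\ref{vanishing_Selmer} proves $\Sha^1(G_{\Q,T}, \mathrm{Sym}^{m}V_p(E)\otimes\epsilon^{-m})=0$; the Tate twists are essential, and neither statement is the vanishing of $H^1_f$ of $\mathrm{Sym}^{k-2}V_p(E)\otimes\epsilon$. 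What these vanishings \emph{do} give, via the Poitou--Tate duality argument packaged in Lemma~\ref{lemma_equivalence}, is $H^2(G_{\Q,T}, \mathrm{Sym}^{m}V_p(E)\otimes\epsilon)=0$ for $m\geq 2$ (Corollary~\ref{vanishing_H2}). The paper then feeds this $H^2$-vanishing into the global Euler characteristic formula, which yields
\[
h^1(G_{\Q,T}, \gr^i W^{\mathrm{\acute{e}t}}) = h^2(G_{\Q,T}, \gr^i W^{\mathrm{\acute{e}t}}) + \dim(\gr^i W^{\mathrm{\acute{e}t}})^{c=-1} = \dim(\gr^i W^{\mathrm{\acute{e}t}})^{c=-1},
\]
which is roughly $(i-1)/2$; the extra automorphy and oddness input (via \cite{BLGG11} and \cite{CLH16}) is used precisely to pin this down. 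So $h^1$ of each graded piece is \emph{not} zero: it is about half the dimension, and summing gives a bound on the Selmer variety that is quadratic in $n$ with leading coefficient roughly $1/4$, to be compared against the de Rham lower bound with leading coefficient roughly $1/2$. The inequality is won by a race between two quadratics, not between a quadratic and a line.

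On the de Rham side you also overstate slightly: you claim $F^0\gr^k W^{\mathrm{dR}}=0$ for every $k\geq 2$ and hence an exact formula $1+\binom{n}{2}$. The paper only uses $\dim F^0(\gr^k W^{\mathrm{dR}})\leq 1$ for $k\geq 3$, giving the lower bound $3 + n(n-3)/2$. This is harmless since the leading term is still $n^2/2$, but it does not match your exact count, and the exactness is not needed for the argument.

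The rest of your outline is sound and mirrors the paper: the identification $\gr^k W^{\mathrm{\acute{e}t}}\cong\mathrm{Sym}^{k-2}V_p(E)\otimes\epsilon$ (Lemma~\ref{lemma_quotient}), the telescoping bound on $\dim H^1_f(G_{\Q,T}, W_n^{\mathrm{\acute{e}t}})$ from the central extensions, and the observation that the technical heart is the modularity input replacing the CM Iwasawa theory of \cite{kim10}. But as written, your proposal skips the Euler-characteristic step entirely and asserts a vanishing that the paper neither proves nor needs; the argument as stated would not close.
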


The proof of this is postponed to \S \ref{proof_main_theorem}. Note that by Lemma \ref{Diophantine_finiteness_lemma} this immediately implies the required finiteness result.

\begin{corollary} \label{main_corollary}
The sets $\mathcal{X} (\Z_p)_n$ are finite for $n$ sufficiently large.
\end{corollary}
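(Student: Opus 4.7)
The corollary is a direct logical consequence of Theorem \ref{main_theorem} (whose proof is the substantive content, deferred to \S \ref{proof_main_theorem}) combined with the metabelian version of Lemma \ref{Diophantine_finiteness_lemma}. My plan is therefore a short two-step deduction in which no new ideas beyond those two ingredients are introduced.

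First I would apply Theorem \ref{main_theorem} to produce an integer $n_0 \geq 2$ such that for every $n \geq n_0$ the strict inequality
\[ \dim H^1_f (G_{\Q, T}, W_n^{\mathrm{\acute{e}t}}) < \dim W_{n}^{\mathrm{dR}} / F^0 W_{n}^{\mathrm{dR}} \]
holds. Second, for each such $n$ I would invoke Lemma \ref{Diophantine_finiteness_lemma} in the metabelian formulation — which, as noted in the paragraph immediately after the lemma's proof, is obtained by replacing $U$ with $W$ throughout and reusing the same fundamental commutative diagram verbatim. This yields the finiteness of $\mathcal{X}(\Z_p)_n$ for every $n \geq n_0$, which is exactly the content of the corollary.

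For completeness I would briefly recall the mechanism inside the lemma, to make explicit what the deduction relies on. The dimension inequality forces the Zariski closure $Y$ of the image of $H^1_f (G_{\Q, T}, W_n^{\mathrm{\acute{e}t}})$ in $W_n^{\mathrm{dR}} / F^0 W_n^{\mathrm{dR}}$ to be a proper subvariety, so there exists a nonzero regular function $\alpha$ on $W_n^{\mathrm{dR}} / F^0 W_n^{\mathrm{dR}}$ vanishing on $Y$. Via the commutative diagram, the set $\mathcal{X}(\Z_p)_n$ lies inside the zero locus of the $p$-adic analytic function $\alpha \circ j^{\mathrm{dR}}_n$ on $\mathcal{X}(\Z_p)$, which is not identically zero because $j^{\mathrm{dR}}_n$ has Zariski dense image, and the $p$-adic Weierstrass preparation theorem makes that zero locus finite.

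The only real obstacle to the corollary is therefore the dimension inequality itself, i.e.\ Theorem \ref{main_theorem}; once this is granted, the above deduction is mechanical. That obstacle is addressed separately in \S \ref{proof_main_theorem}, resting on the Bloch-Kato Selmer group vanishing established via modularity in \S \ref{Block-Kato_Selmer} and the Iwasawa-theoretic passage to ordinary Selmer groups in \S \ref{Selmer}.
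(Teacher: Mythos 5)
Your deduction is correct and is exactly the paper's argument: Corollary \ref{main_corollary} follows immediately from Theorem \ref{main_theorem} together with the metabelian version of Lemma \ref{Diophantine_finiteness_lemma}, whose mechanism you recall accurately. No further comment is needed.
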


\section{Modularity and vanishing of Bloch-Kato Selmer groups} \label{Block-Kato_Selmer}

In this section, we prove the vanishing of certain Bloch-Kato Semer groups, which will turn out to be of crucial importance for our applications to the Chabauty-Kim method, using modularity techniques. Recall that for a number field $F$, a finite set $S$ of places of $F$ containing the infinite places, and a rational prime $p$ such that $S$ contains no places above $p$, given a representation $V$ of $G_{F}$ over $\Q_p$ unramified outside $S$, having set $T = S \cup \set{v \mid p}$, the Bloch-Kato Selmer group of $V$ is defined as
\[ H^1_f (G_{F, T}, V) = \ker \bigl( H^1 (G_{F, T}, V) \ra \textstyle{\prod_{v \mid p}} H^1 (G_{F_v}, B_{\mathrm{cris}} \otimes V) \bigr), \]
where $B_{\mathrm{cris}}$ denotes Fontaine's crystalline ring.

Moving back to our original setting, we denote by $\rho_{E, p} : G_{\Q} \ra \GL_2 (\Z_p)$ the representation of $G_{\Q}$ on the $p$-adic Tate module $T_p (E)$ of $E$, and we let $V_p (E) = T_p (E) \otimes \Q_p$. By a classical result of Serre, see \cite[\S 4]{serre72}, for every finite extension $F$ of $\Q$ we have that for $p$ large enough the representation $\rho_{E, p} \! \mid_{G_F}$ is surjective. For the rest of this paper, given any finite extension $F$ of $\Q$, up to eventually choosing a bigger $p$, we assume that this condition is satisfied. As above, we let $T = S \cup \set{p}$. For any algebraic extension $F$ of $\Q$, we still denote by $T$, with a slight abuse of notation, the set of places of $F$ above those in $T$. We prove the following result\footnote{This result improves a previous result by Allen, see \cite[Theorem 3.3.1]{allen16}, using the potential automorphy results of \cite{allen_et_al18} and the vanishing results for adjoint Bloch-Kato Selmer groups of \cite{NT19}.}.

\begin{theorem} \label{vanishing_Bloch-Kato_Selmer}
Let $F$ be an imaginary quadratic field. Then we have
\[ H^1_f (G_{\Q, T}, \mathrm{Sym}^{2n} V_p(E) \otimes \epsilon^{-n} \delta_{F/\Q}^{n+1} ) = 0 \]
for all $n \geq 1$.
\end{theorem}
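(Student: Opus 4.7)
The plan is to use restriction to reduce the vanishing to a Bloch-Kato Selmer group over an auxiliary CM extension where $\mathrm{Sym}^n \rho_{E,p}$ becomes automorphic, and then combine the Newton-Thorne vanishing theorem for adjoint Bloch-Kato Selmer groups with a Clebsch-Gordan decomposition of the adjoint. Write $W = \mathrm{Sym}^{2n} V_p(E) \otimes \epsilon^{-n}$. Since our running hypothesis on $p$ ensures $\rho_{E,p}|_{G_F}$ is surjective onto $\mathrm{GL}_2(\Z_p)$, the representation $W|_{G_F}$ is absolutely irreducible for every $n \geq 1$, so $(W \otimes \delta_{F/\Q}^{n+1})^{G_F} = 0$. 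Inflation-restriction for $F/\Q$, together with the facts that $\delta_{F/\Q}|_{G_F}$ is trivial and that the crystalline condition at $p$ restricts correctly because $F/\Q$ is unramified at $p$, then yields an injection
\[ H^1_f(G_{\Q,T}, W \otimes \delta_{F/\Q}^{n+1}) \hookrightarrow H^1_f(G_{F,T}, W|_{G_F}), \]
so it suffices to prove vanishing of the right-hand side.

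By the potential automorphy theorem of \cite{allen_et_al18}, there exists a finite CM extension $L/F$ such that $\mathrm{Sym}^n \rho_{E,p}|_{G_L}$ is associated to a regular algebraic, polarizable, cuspidal automorphic representation $\Pi$ of $\mathrm{GL}_{n+1}(\mathbb{A}_L)$; by enlarging $p$ if necessary we may arrange that $\rho_{E,p}|_{G_L}$ remains surjective onto $\mathrm{GL}_2(\Z_p)$. The adjoint Bloch-Kato Selmer vanishing theorem of \cite{NT19}, applied to $\Pi$, then gives
\[ H^1_f(G_{L,T}, \mathrm{ad}^0 \rho_\Pi) = 0. \]
Combining the self-duality relation $(V_p(E))^{\vee} \cong V_p(E) \otimes \epsilon^{-1}$ with the Clebsch-Gordan formula for tensor products of symmetric powers of a two-dimensional representation produces a Galois-equivariant decomposition
\[ \mathrm{ad}^0(\mathrm{Sym}^n \rho_{E,p}) \cong \bigoplus_{k=1}^{n} \mathrm{Sym}^{2k} V_p(E) \otimes \epsilon^{-k}, \]
whose summands are each crystalline at $p$ and unramified outside $T$. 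Since Bloch-Kato Selmer groups commute with finite direct sums, the displayed vanishing forces the vanishing of the Selmer group of each summand individually; in particular the $k = n$ summand gives $H^1_f(G_{L,T}, W|_{G_L}) = 0$.

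It remains to descend from $L$ back to $F$. Absolute irreducibility of $W|_{G_L}$ (again by the big image hypothesis on $\bar\rho_{E,p}|_{G_L}$) yields $(W|_{G_L})^{G_L} = 0$, so a second application of inflation-restriction gives
\[ H^1_f(G_{F,T}, W|_{G_F}) \hookrightarrow H^1_f(G_{L,T}, W|_{G_L}) = 0, \]
which combined with the reduction in the first paragraph proves the theorem. The most delicate step is verifying the technical hypotheses underlying the potential automorphy and adjoint vanishing theorems, especially the residual bigness assumption on $\bar\rho_{E,p}|_{G_L}$ required to run the Taylor-Wiles-Kisin patching argument; this should be arranged by taking $p$ large enough that $\bar\rho_{E,p}$ has image $\mathrm{GL}_2(\F_p)$ and by choosing $L$ linearly disjoint over $\Q$ from any preassigned finite extension, so that the relevant surjectivity and big-image conditions persist after restriction.
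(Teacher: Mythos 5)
Your overall strategy (potential automorphy over a CM extension, Newton--Thorne adjoint Selmer vanishing, Clebsch--Gordan to extract $\mathrm{Sym}^{2n}V_p(E)\otimes\epsilon^{-n}$ from the adjoint) is the same as the paper's, but the opening reduction and the field over which you apply Newton--Thorne both contain a genuine gap, and it is precisely the point the paper's argument is structured to avoid.

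Your first step discards the character $\delta_{F/\Q}^{n+1}$: you inject $H^1_f(G_{\Q,T}, W\otimes\delta_{F/\Q}^{n+1})$ into $H^1_f(G_{F,T}, W|_{G_F})$ and then try to prove the latter vanishes. But by Shapiro's lemma
\[
H^1_f(G_{F,T}, W|_{G_F}) \cong H^1_f(G_{\Q,T}, W) \oplus H^1_f(G_{\Q,T}, W\otimes\delta_{F/\Q}),
\]
so you are now trying to prove that \emph{both} quadratic twists vanish, while the theorem asserts only the $\delta_{F/\Q}^{n+1}$-twist vanishes. The two twists are not on an equal footing: $W=\mathrm{Sym}^{2n}V_p(E)\otimes\epsilon^{-n}$ is pure of motivic weight $0$, so there is no weight obstruction to a nonzero $H^1_f$, and the sign of the functional equation (hence the expected parity of $\dim H^1_f$ under Bloch--Kato) flips with the quadratic twist. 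The particular exponent $n+1$ in the theorem is forced on you by the sign bookkeeping; the other twist is not expected to vanish in general, so the intermediate statement $H^1_f(G_{F,T}, W|_{G_F})=0$ is most likely false and cannot be the right reduction.

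The second manifestation of the same issue is your invocation of \cite[Theorem~5.2]{NT19}: that theorem gives $H^1_f(G_{L^+,T}, \mathrm{ad}\,\rho_\Pi)=0$ over the \emph{totally real} subfield $L^+$, where $\mathrm{ad}\,\rho_\Pi$ is the $G_{L^+}$-representation arising from the polarization, not $H^1_f(G_{L,T}, \mathrm{ad}^0\rho_\Pi)=0$ over the CM field $L$. Passing from $L^+$ to $L$ costs you exactly a $\delta_{L/L^+}$-twisted complementary summand, so the vanishing you need over $G_L$ does not follow. Your Clebsch--Gordan decomposition is for the ``naive'' adjoint as a $G_\Q$- (hence $G_L$-) representation; as a $G_{L^+}$-representation the polarized adjoint differs from this by a $\delta_{L/L^+}$-twist on either the symmetric or the antisymmetric part, depending on the parity of $n$. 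The paper's proof is built around this: it identifies $\mathrm{Sym}^{2n}V_p(E)\otimes\epsilon^{-n}\delta_{F/\Q}^{n+1}$ as a $G_\Q$-direct summand of the polarized adjoint $\mathrm{ad}(\mathrm{Sym}^n V|_{G_F})$ (the exponent $n+1$ on $\delta$ is precisely what records whether the summand sits in the $\mathrm{Sym}^2$- or $\wedge^2$-part, according to whether $\mathrm{Sym}^n\rho_{E,p}$ is orthogonal or symplectic), then restricts from $\Q$ to the totally real field $F'^+$ and applies Newton--Thorne there. To repair your argument you would need to stay over totally real fields throughout and carry the $\delta^{n+1}$-twist into the decomposition of the polarized adjoint, which amounts to reproducing the paper's parity analysis.
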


\begin{proof}
For simplicity of notation, let us write $V = V_p (E)$. The Weil pairing induces an isomorphism $V^\vee \cong V \otimes \epsilon^{-1}$, and so we have isomorphisms $(\mathrm{Sym}^n V)^\vee \cong \mathrm{Sym}^n V \otimes \epsilon^{-n}$ for every $n \geq 1$. It follows that the representation of $G_{\Q}$ on $\mathrm{Sym}^n V$ factors through $\mathrm{GO}_{n+1} (\Q_p)$ with totally even multiplier when $n$ is even, and through $\mathrm{GSp}_{n+1} (\Q_p)$ with totally odd multiplier when $n$ is odd.

For every $n \geq 1$, the natural symmetric pairing $\mathrm{Sym}^n V \times \mathrm{Sym}^n V \ra \mathrm{Sym}^{2n} V$ induces a surjection $\mathrm{Sym}^2 (\mathrm{Sym}^n V) \ra \mathrm{Sym}^{2n} V$, which in turn induces a surjection
\[ \mathrm{Sym}^{2} (\mathrm{Sym}^n V) \otimes \epsilon^{-n} \delta_{F/\Q}^{n+1} \ra \mathrm{Sym}^{2n} V \otimes \epsilon^{-n} \delta_{F/\Q}^{n+1} \]
which is $G_{\Q, T}$-equivariant, and has a $G_{\Q, T}$-equivariant splitting. It follows that it is enough to show that $H^1_f (G_{\Q, T}, \mathrm{Sym}^{2} (\mathrm{Sym}^n V) \otimes \epsilon^{-n} \delta_{F/\Q}^{n+1}) = 0$ for all $n \geq 1$.

Note that we have a decomposition as $G_{\Q}$-representations
\[ \mathrm{ad} (\mathrm{Sym}^n V \! \mid_{G_{F}}) \cong \mathrm{Sym}^2 (\mathrm{Sym}^n V) \otimes \epsilon^{-n} \delta_{F/\Q} \oplus \wedge^2 (\mathrm{Sym}^n V) \otimes \epsilon^{-n} \]
if $n$ is even, and
\[ \mathrm{ad} (\mathrm{Sym}^n V \! \mid_{G_{F}}) \cong \mathrm{Sym}^2 (\mathrm{Sym}^n V) \otimes \epsilon^{-n} \oplus \wedge^2 (\mathrm{Sym}^n V) \otimes \epsilon^{-n}  \delta_{F/\Q} \]
if $n$ is odd. In both cases, we have that $\mathrm{Sym}^{2} (\mathrm{Sym}^n V) \otimes \epsilon^{-n} \delta_{F/\Q}^{n+1}$ as a $G_{\Q}$-representation is a direct summand of $\mathrm{ad} (\mathrm{Sym}^n V \! \mid_{G_{F}})$. It therefore follows that it is enough to prove that $H^1_f (G_{\Q, T}, \mathrm{ad} (\mathrm{Sym}^n V \! \mid_{G_{F}})) = 0$ for all $n \geq 1$.

By \cite[Theorem 7.1.9]{allen_et_al18} we have that for every $n \geq 1$ there exists a finite CM Galois extension $F'$ of $F$ such that the representation $\mathrm{Sym}^n \, V \! \mid_{G_{F'}}$ is automorphic. Let us fix $n \geq 1$ and a CM field $F'$ as above, and let us denote by $F'^+$ the maximal totally real subfield of $F'$. By considering restriction-corestriction and inflation, we get an injection $H^1 (G_{\Q, T}, \mathrm{ad} (\mathrm{Sym}^n V \! \mid_{G_{F}})) \ra H^1 (G_{F'^+, T}, \mathrm{ad} (\mathrm{Sym}^n V \! \mid_{G_{F'}}))$ such that the following diagram commutes
\[ \xymatrix{
 H^1 (G_{\Q, T}, \mathrm{ad} (\mathrm{Sym}^n V \! \mid_{G_{F}})) \ar[d] \ar[r] & H^1 (G_{\Q_p}, B_{\mathrm{cris}} \otimes \mathrm{ad} (\mathrm{Sym}^n V \! \mid_{G_{F}})) \ar[d] \\
H^1 (G_{F'^+, T}, \mathrm{ad} (\mathrm{Sym}^n V \! \mid_{G_{F'}})) \ar[r] & \prod_{w \mid p} H^1 (G_{F'^+_w}, B_{\mathrm{cris}} \otimes \mathrm{ad} (\mathrm{Sym}^n V \! \mid_{G_{F'}})).} \]

We then get an injection $H^1_f (G_{\Q, T}, \mathrm{ad} (\mathrm{Sym}^n V \! \mid_{G_{F}})) \ra H^1_f (G_{F'^+, T}, \mathrm{ad} (\mathrm{Sym}^n V \! \mid_{G_{F'}}))$, and so it is enough to prove that $H^1_f (G_{F', T}, \mathrm{ad} (\mathrm{Sym}^n V \! \mid_{G_{F'}})) = 0$. In particular, up to eventually choosing a bigger $p$, we can assume that each $\mathrm{Sym}^n \, V \! \mid_{G_F}$ is automorphic.

Since $\rho_{E, p} \! \mid_{G_F}$ is surjective, we have that $\mathrm{Sym}^n \, \rho_{E, p} (G_{F(\zeta_{p^\infty})})$ is an enormous subgroup of $\GL_{n+1} (\Z_p)$ for every $n \geq 1$, see \cite[Example 2.34]{NT19}. Then, since $\mathrm{Sym}^n \, V \! \mid_{G_F}$ is automorphic, by \cite[Theorem 5.2]{NT19} we get that $H^1_f (G_{\Q, T}, \ad (\mathrm{Sym}^n V \! \mid_{G_F})) = 0$ for every $n \geq 1$. The conclusion follows.
\end{proof}

\begin{remark} \label{totally_real_CM}
Note that our proof works verbatim when $\Q$ is replaced by any totally real number field.
\end{remark}

\section{Iwasawa theory and vanishing of Selmer groups} \label{Selmer}

In this section, we prove a vanishing result for Selmer groups, using some Iwasawa theory and the vanishing results for Bloch-Kato Selmer groups proved in the previous section. Let us first of all introduce some notation. Given a representation $V$ of $G_{F}$ over $\Q_p$ unramified outside a finite set $S$ of places of $F$ containing the infinite places and no places above $p$, having set $T = S \cup \set{v \mid p}$, we recall that the $i$-th Selmer group of $V$ is defined as
\[ \Sha^i (G_{F, T}, V) = \ker \bigl( H^i (G_{F, T}, V) \ra \textstyle{\prod_{v \in T}} H^i (G_{F_v}, V) \bigr). \]

For $i = 1$, it is immediate to notice that we have an inclusion $\Sha^1 (G_{F, T}, V) \subset H^1_f (G_{F, T}, V)$. Moreover, we have

\begin{lemma} \label{lemma_equivalence}
Let $V^D = V^\vee \otimes \epsilon$ denote the Cartier dual of $V$. Then:
\begin{enumerate}
\item[$(1)$] If $H^2 (G_{F, T}, V^D) = 0$, then $\Sha^1 (G_{F, T}, V) = 0$.
\item[$(2)$] If $V$ is geometric and pure of nonzero weight\footnote{Recall that a geometric representation $V$ of $G_{F}$ over $\Q_p$ is said to be pure of weight $w \in \R$ if for each finite place $v$ of $F$ the Weil-Deligne representation attached to $V \! \mid_{G_{F_v}}$ is pure of weight $w$ in the sense of \cite[\S 1]{TY07}.}, and if $\Sha^1 (G_{F, T}, V) = 0$, then $H^2 (G_{F, T}, V^D) = 0$.
\end{enumerate}
\end{lemma}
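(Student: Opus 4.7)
The plan is to derive both implications from the Poitou--Tate global duality pairing
\[ \Sha^1(G_{F,T}, V) \times \Sha^2(G_{F,T}, V^D) \ra \Qp, \]
a perfect pairing of finite-dimensional $\Qp$-vector spaces coming from the usual nine-term Poitou--Tate exact sequence applied to a Galois-stable $\Zp$-lattice of $V$ and then inverting $p$.

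For part $(1)$, the inclusion $\Sha^2 \subset H^2$ is tautological, so the hypothesis $H^2(G_{F,T}, V^D) = 0$ forces $\Sha^2(G_{F,T}, V^D) = 0$, and the duality above immediately yields $\Sha^1(G_{F,T}, V) = 0$.

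For part $(2)$, the same duality converts the vanishing of $\Sha^1(G_{F,T}, V)$ into that of $\Sha^2(G_{F,T}, V^D)$, i.e. the localization map
\[ H^2(G_{F,T}, V^D) \hra \prod_{v \in T} H^2(G_{F_v}, V^D) \]
is injective, and it is therefore enough to show that every local factor vanishes. At an archimedean $v$ this is automatic: $G_{F_v}$ is finite and the coefficients lie in characteristic zero, so by semisimplicity of $\Qp[G_{F_v}]$ the higher continuous cohomology vanishes. At a finite $v \in T$, local Tate duality identifies
\[ H^2(G_{F_v}, V^D) \cong H^0(G_{F_v}, V)^\vee = (V^{G_{F_v}})^\vee, \]
and the task reduces to proving $V^{G_{F_v}} = 0$ at each finite $v \in T$.

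The main obstacle is this final local vanishing, which is precisely where purity of nonzero weight enters. A nonzero element of $V^{G_{F_v}}$ would produce a trivial sub-Weil--Deligne representation inside $V|_{G_{F_v}}$ (via the usual $\ell$-adic formalism when $v \nmid p$, and via the potentially semistable Dieudonn\'e module $D_{\mathrm{pst}}(V|_{G_{F_v}})$ when $v \mid p$, which is well-defined since $V$ is geometric and hence de Rham at $p$); such a subrepresentation is pure of weight $0$. On the other hand, the hypothesis that $V$ is pure of weight $w \neq 0$ constrains the Frobenius weights appearing in the monodromy filtration of $V|_{G_{F_v}}$ and rules out the presence of a trivial weight-$0$ piece, yielding the required vanishing.
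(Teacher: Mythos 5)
Your proof follows essentially the same route as the paper's: Poitou--Tate global duality to control $\Sha^1$ via $H^2$ of the dual (and vice versa), then local Tate duality to reduce part $(2)$ to the vanishing of $H^0(G_{F_v}, V)$ for each $v \in T$, with purity of nonzero weight invoked for that last step. The only differences are cosmetic or mildly generalising. For part $(1)$ you go through the perfect pairing $\Sha^1(V) \times \Sha^2(V^D) \to \Q_p$, whereas the paper reads off $\Sha^1(V) = 0$ directly from the segment $H^2(G_{F,T},V^D)^\vee \to H^1(G_{F,T},V) \to \prod_v H^1(G_{F_v},V)$ of the nine-term sequence; these are two faces of the same coin. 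In part $(2)$ you explicitly dispose of the archimedean places (which the paper silently folds into the ``local Tate duality for $v \in T$'' step, even though at a real place one has $H^2(G_{F_v}, V^D)=0$ directly rather than an isomorphism with $H^0(G_{F_v},V)^\vee$), and at $v\mid p$ you use $D_{\mathrm{pst}}$ where the paper uses $D_{\mathrm{cris}}$ (the paper can do so because in its running setup $V$ is unramified at $p$, hence crystalline). Neither change affects the substance.

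One remark, which applies equally to the paper's own argument and is not a defect of your write-up relative to it: the final step, that purity of nonzero weight forces $V^{G_{F_v}}=0$ at a finite $v\nmid p$, is asserted rather than proved in both places, and it requires care. A pure Weil--Deligne representation of positive weight $w$ with $N \neq 0$ can in general carry a Frobenius-fixed vector in $\ker N$ landing in $\gr_0$ of the (weight-$w$-centred) monodromy filtration, so the mere existence of a trivial weight-$0$ sub does not by itself contradict purity of weight $w > 0$; extra structure (e.g.\ $N=0$ at the place in question, or negativity of the weight, or a sharper constraint on the monodromy filtration) is needed to close the argument. Since you are reproducing the paper's reasoning verbatim here, I flag this as a shared loose end rather than a gap specific to your proposal.
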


\begin{proof}
By Poitou-Tate duality, we have an exact sequence
\[ H^2 (G_{F, T}, V^D)^\vee \ra H^1 (G_{F, T}, V) \ra \textstyle{\prod_{v \in T}} H^1 (G_{F_v}, V), \]
and so part $(1)$ immediately follows.

Moving to part $(2)$, again by Poitou-Tate duality, we have that $\Sha^2 (G_{F, T}, V^D) \cong \Sha^1 (G_{F, T}, V)^\vee$, and so that $\Sha^2 (G_{F, T}, V^D) = 0$. It follows that the map
\[ H^2 (G_{F, T}, V^D) \ra \textstyle{\prod_{v \in T}} H^2 (G_{F_v}, V^D) \]
is injective. Now, by local Tate duality, for every $v \in T$ we have an isomorphism $H^2 (G_{F_v}, V^D) \cong H^0 (G_{F_v}, V)^\vee$. In order to get the conclusion, it is then enough to show that $H^0 (G_{F_v}, V) = 0$ for each $v \in T$.

Let $v \in T$. If $v \nmid p$, since the Weil-Deligne representation attached to $V \! \mid_{G_{F_v}}$ is pure of nonzero weight by assumption, we have that $H^0 (G_{F_v}, V) = \Hom_{G_{F_v}} (\Q_p, V) = 0$. If $v \mid p$, since $V \! \mid_{G_{F_v}}$ is unramified, then it is crystalline, and we have $H^0(G_{F_v}, V) = \Hom_{G_{F_v}} (\Q_p, V) \cong \Hom_{\mathrm{MF}(\phi)} (F_v^{\mathrm{nr}}, D_{\mathrm{cris}} (V))$, where $\mathrm{MF}(\phi)$ denotes Fontaine's category of admissible filtered $\phi$-modules, $F_v^{\mathrm{nr}}$ denotes the maximal absolutely unramified subfield of $F_v$, and $D_{\mathrm{cris}}$ denotes Fontaine's crystalline functor. Since the Weil-Deligne representation attached to $V \! \mid_{G_{F_v}}$ is pure of nonzero weight by assumption, we get that $\Hom_{\mathrm{MF}(\phi)} (F_v^{\mathrm{nr}}, D_{\mathrm{cris}} (V)) = 0$, and so the conclusion follows.
\end{proof}

Back to our original setting, we let $\Q^\infty$ be the field generated by the $p$-power torsion points of $E$ over $\Q$. Note that the Weil pairing gives an inclusion $\Q(\zeta_{p^\infty}) \subset \Q^\infty$. Also, for our choice of $p$ the representation $\rho_{E, p}$ induces an isomorphism $\Gal (\Q^\infty / \Q) \cong \GL_2 (\Z_p)$. Write $\Gamma = \Gal (\Q^\infty / \Q)$, and let $\Lambda = \Z_p [ \! [ \Gamma ] \! ]$ be the Iwasawa algebra of $\Gamma$. Let $H^{\infty}_T$ be the maximal unramified abelian pro-$p$ extension of $\Q^\infty$ in which all primes above $T$ split completely, and let $\mathcal{X}_T^\infty = \Gal (H^\infty_T / \Q^\infty)$, which is naturally a $\Lambda$-module. We prove the following lemma.

\begin{lemma} \label{Selmer_Iwasawa_lemma}
Let $V$ be a semisimple representation of $\GL(V_p (E))$ over $\Q_p$ which does not contain $\Q_p$ as an irreducible subrepresentation. Then, considering $V$ as a representation of $G_{\Q}$ via $\rho_{E, p}$, we have an isomorphism
\[\Sha^1 (G_{\Q, T}, V) \cong \Hom_{\Lambda} (\mathcal{X}_T^\infty, V). \]
\end{lemma}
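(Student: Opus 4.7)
The plan is to exploit the Hochschild-Serre (inflation-restriction) spectral sequence for the extension $\Q^\infty / \Q$: since the action of $G_{\Q}$ on $V$ factors through $\Gamma = \Gal(\Q^\infty / \Q)$ via $\rho_{E, p}$, the subgroup $G_{\Q^\infty, T}$ acts trivially on $V$, and the five-term sequence reads
$$0 \to H^1(\Gamma, V) \to H^1(G_{\Q, T}, V) \xrightarrow{\mathrm{res}} H^1(G_{\Q^\infty, T}, V)^\Gamma \to H^2(\Gamma, V).$$
Because the action is trivial, the third term becomes $\Hom_{\mathrm{cts}}(G_{\Q^\infty, T}^{\mathrm{ab}, p}, V)^\Gamma = \Hom_\Lambda(G_{\Q^\infty, T}^{\mathrm{ab}, p}, V)$, where $G_{\Q^\infty, T}^{\mathrm{ab}, p}$ denotes the maximal abelian pro-$p$ quotient and $\Lambda$-linearity encodes continuity together with $\Gamma$-equivariance.

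Next I would translate the Selmer condition onto the Iwasawa side. For each place $w \mid v$ of $\Q^\infty$ with $v \in T$, let $D_w \subseteq G_{\Q^\infty, T}^{\mathrm{ab}, p}$ denote the image of the decomposition group. A cocycle-level argument shows that if $\alpha \in H^1(G_{\Q, T}, V)$ is locally trivial at $v$, writing $\alpha(g) = (g-1)a_v$ on $G_{\Q_v}$ and restricting to $G_{\Q_v^\infty}$ (on which $V$ has trivial action) forces $\mathrm{res}(\alpha)$ to vanish on $D_w$. Since complete splitting at $T$ implies unramifiedness at $T$, and $G_{\Q^\infty, T}^{\mathrm{ab}, p}$ already classifies extensions unramified outside $T$, the quotient of $G_{\Q^\infty, T}^{\mathrm{ab}, p}$ by the closed subgroup generated by all $D_w$ is precisely $\mathcal{X}_T^\infty$. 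Hence $\mathrm{res}$ descends to a well-defined map $\Sha^1(G_{\Q, T}, V) \to \Hom_\Lambda(\mathcal{X}_T^\infty, V)$.

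To upgrade this to an isomorphism, one needs the vanishings $H^1(\Gamma, V) = 0$ (for injectivity) and $H^2(\Gamma, V) = 0$ (so every element of $\Hom_\Lambda(\mathcal{X}_T^\infty, V)$ lifts through the five-term sequence), together with analogous local vanishings $H^1(\Gamma_v, V) = 0$ at each $v \in T$ (so that the unique global lift is automatically Selmer and not just Selmer modulo a local inflation class). Since $V$ is semisimple with no copy of $\Q_p$, each irreducible constituent is of the form $\Sym^k V_{\mathrm{std}} \otimes \det^n$ with $(k, n) \neq (0, 0)$; because $\Gamma \cong \GL_2(\Z_p)$ and each $\Gamma_v = \rho_{E, p}(G_{\Q_v})$ is open inside a suitable reductive subgroup of $\GL_2(\Q_p)$, Lazard's comparison theorem identifies continuous cohomology with Lie algebra cohomology, and Whitehead's first and second lemmas (plus a direct calculation for the determinant characters $\det^n$, $n \neq 0$, using a topological generator of $\Z_p^\times$) give the required vanishings in degrees $1$ and $2$.

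The main obstacle is handling the local vanishings $H^i(\Gamma_v, V) = 0$ uniformly: whereas the global $\Gamma$ is all of $\GL_2(\Z_p)$, the local image $\Gamma_v$ depends on the reduction type (lying in a Borel at ordinary good reduction, a non-split Cartan normalizer at supersingular, and potentially more exotic subgroups at bad primes). One must identify the reductive hull of each $\Gamma_v$ and verify that the Lazard/Whitehead package still applies inside it; this is where Serre's openness results for $\rho_{E, p}|_{G_{\Q_v}}$ and the ramification behaviour at bad primes enter.
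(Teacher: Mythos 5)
Your plan follows exactly the paper's route: the five-term Hochschild--Serre sequence for $G_{\Q^\infty,T}\lhd G_{\Q,T}$, the identification of the middle term with $\Hom_\Lambda(G_{\Q^\infty,T}^{\mathrm{ab},p},V)$ because $G_{\Q^\infty,T}$ acts trivially, and the global vanishings $H^1(\Gamma,V)=H^2(\Gamma,V)=0$ via Lazard's comparison plus Whitehead-type vanishing for $\gl_2$ (the paper cites \cite{SW00} and \cite[Thm.~10]{HS53} for precisely this). Where you go beyond the paper is the final step. The paper simply asserts that the isomorphism $H^1(G_{\Q,T},V)\cong\Hom_\Lambda(G_{\Q^\infty,T}^{\mathrm{ab},p},V)$ ``induces'' the isomorphism $\Sha^1(G_{\Q,T},V)\cong\Hom_\Lambda(\mathcal{X}_T^\infty,V)$, offering no justification. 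You correctly observe that only the inclusion $\Sha^1\subseteq\Hom_\Lambda(\mathcal{X}_T^\infty,V)$ is formal; the reverse inclusion needs a local argument, namely $H^1(\Gamma_v,V)=0$ for each $v\in T$, so that a class whose restriction to $H^1(G_{\Q_v},V)$ dies on the decomposition group $G_{\Q^\infty_w}$ is actually zero rather than merely inflated from $\Gamma_v$. This harder inclusion \emph{is} used in the paper (in Theorem~\ref{vanishing_Selmer}, the step ``$\Sha^1(G_{\Q,T},V')=0$, and hence $\Hom_\Lambda(\mathcal{X}_T^\infty,V')=0$'' requires exactly $\Hom_\Lambda(\mathcal{X}_T^\infty,V')\subseteq\Sha^1$), so you have put your finger on a genuine gap in the paper's own proof, not just an omission you would need to fill.

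That said, your proposal does not close the gap either, and the route you sketch is not quite right. You write that one should ``identify the reductive hull of each $\Gamma_v$ and verify that the Lazard/Whitehead package still applies inside it''; but Whitehead's lemmas concern semisimple Lie algebras, while $\Gamma_v=\rho_{E,p}(G_{\Q_v})$ is typically solvable (open in a Borel at $p$ in the ordinary case, and of small dimension or even trivial at primes of bad reduction), so the package does not transfer. The required local vanishings are plausibly true for the specific modules $\Sym^a V_p(E)\otimes\epsilon^{b}$ appearing here---a direct Lie-algebra computation gives, e.g., $H^1(\mathfrak{b},\mathrm{ad}^0)=0$ for the Borel subalgebra---but this must be verified module by module and reduction type by reduction type, which neither you nor the paper does. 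Also, ``Serre's openness results for $\rho_{E,p}|_{G_{\Q_v}}$'' is a misattribution: Serre's theorem gives openness of the \emph{global} image, and $\Gamma_v$ is in general not open in $\GL_2(\Z_p)$.
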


\begin{proof}
Since $\Gamma$ is isomorphic to $\GL_2 (\Z_p)$ via $\rho_{E, p}$, we have an isomorphism $H^* (\Gamma, V) \cong H^* (\GL_2 (\Z_p), V)$. Now, since $V$ does not contain $\Q_p$ as an irreducible subrepresentation, by Lazard's isomorphism, see for instance \cite[Theorem 5.2.4]{SW00}, and the fact that for the Lie algebra cohomology we have $H^i (\gl_2 (\Z_p), V) = 0$ for $i \geq 0$ by \cite[Theorem 10]{HS53}, we get that $H^i (\GL_2 (\Z_p), V) = 0$ for $i \geq 0$. It follows that $H^i (\Gamma, V) = 0$ for $i \geq 0$.

Since the induced action of $G_{\Q^\infty, T}$ on $V$ is trivial, the Hochschild-Serre spectral sequence for the closed normal subgroup $G_{\Q^\infty, T}$ of $G_{\Q, T}$ gives an exact sequence
\[ H^1 (\Gamma, V) \ra H^1 (G_{\Q, T}, V) \ra \Hom (G_{\Q^\infty, T}, V)^{\Gamma} \ra H^2 (\Gamma, V). \]

We then get an isomorphism $H^1 (G_{\Q, T}, V) \cong \Hom (G_{\Q^\infty, T}, V)^{\Gamma} = \Hom_\Lambda (G_{\Q^\infty, T}, V)$, which in turn induces an isomorphism $\Sha^1 (G_{\Q, T}, V) \cong \Hom_\Lambda (\mathcal{X}_T^\infty, V)$.
\end{proof}

We prove the following result.

\begin{theorem} \label{vanishing_Selmer}
We have
\[ \Sha^1 (G_{\Q, T}, \mathrm{Sym}^{n} V_p (E) \otimes \epsilon^{-n}) = 0 \]
for all $n \geq 2$.
\end{theorem}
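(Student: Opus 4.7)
The plan is to reinterpret the Selmer group via Lemma~\ref{Selmer_Iwasawa_lemma} and then exploit Theorem~\ref{vanishing_Bloch-Kato_Selmer} through an Iwasawa-theoretic comparison. First I would verify that $V \eqdef \mathrm{Sym}^n V_p(E) \otimes \epsilon^{-n}$ satisfies the hypotheses of Lemma~\ref{Selmer_Iwasawa_lemma}: as a $\GL(V_p(E)) = \GL_2(\Z_p)$-representation it is $\mathrm{Sym}^n \otimes \det^{-n}$, which is irreducible (hence semisimple) and nontrivial for $n \geq 1$, so it contains no copy of $\Q_p$. The lemma then gives
\[ \Sha^1(G_{\Q,T}, V) \cong \Hom_\Lambda(\mathcal{X}_T^\infty, V), \]
reducing the problem to showing the right-hand side vanishes for $n \geq 2$.

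Next, I would combine the inclusion $\Sha^1 \subseteq H^1_f$, Theorem~\ref{vanishing_Bloch-Kato_Selmer}, and Lemma~\ref{Selmer_Iwasawa_lemma} applied in reverse to obtain
\[ \Hom_\Lambda(\mathcal{X}_T^\infty, \mathrm{Sym}^{2m} V_p(E) \otimes \epsilon^{-m} \delta_{F/\Q}^{m+1}) = 0 \]
for every $m \geq 1$ and every imaginary quadratic field $F$ (chosen so that $\delta_{F/\Q}$ factors through $\Gamma$). The Clebsch-Gordan decomposition
\[ \mathrm{Sym}^m V_p(E) \otimes \mathrm{Sym}^m V_p(E) = \bigoplus_{k=0}^m \mathrm{Sym}^{2m-2k} V_p(E) \otimes \epsilon^k, \]
which underlies the adjoint splittings used in the proof of Theorem~\ref{vanishing_Bloch-Kato_Selmer}, exhibits every irreducible factor of the form $\mathrm{Sym}^{2j} V_p(E) \otimes \epsilon^{j'}$ inside a suitable $\mathrm{ad}(\mathrm{Sym}^m V_p(E)\!\mid_{G_F})$, and together with left-exactness of $\Hom_\Lambda(\mathcal{X}_T^\infty, -)$ this should propagate the above vanishing to each summand. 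Passage from the available twist to the target $\epsilon^{-n}$ would then be accomplished via a cyclotomic deformation argument along the cyclotomic sub-tower $\Q(\zeta_{p^\infty}) \subset \Q^\infty$: twisting by arbitrary powers of $\epsilon$ inside $\mathcal{X}_T^\infty$, together with the rigidity of $\mathcal{X}_T^\infty$ as a finitely generated $\Lambda$-module, should transfer the vanishing to all cyclotomic twists, yielding in particular the case of the twist $\epsilon^{-n}$.

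The main obstacle is the precise matching of twists: Theorem~\ref{vanishing_Bloch-Kato_Selmer} only controls the twist $\epsilon^{-m}\delta_{F/\Q}^{m+1}$, while the statement demands the twist $\epsilon^{-n}$. Reconciling these requires both (i) a parity split on $n$, mirroring the $\mathrm{Sym}^2$ versus $\wedge^2$ (equivalently, $\mathrm{GO}$ versus $\mathrm{GSp}$) dichotomy already present in the proof of Theorem~\ref{vanishing_Bloch-Kato_Selmer}, so that odd and even $n$ are handled via $\mathrm{Sym}^2(\mathrm{Sym}^{(n+1)/2} V_p(E))$ or $\wedge^2(\mathrm{Sym}^{n/2+1} V_p(E))$ respectively, and (ii) a careful Iwasawa-theoretic transfer along the cyclotomic tower to bridge the gap between the power of $\epsilon$ appearing in the Clebsch-Gordan factor and the target exponent $-n$. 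Carrying this bookkeeping out uniformly in $n$ is the technical crux of the argument.
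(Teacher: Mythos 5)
Your starting point is sound: using Lemma~\ref{Selmer_Iwasawa_lemma} to convert $\Sha^1$ into $\Hom_\Lambda(\mathcal{X}_T^\infty, -)$, feeding in Theorem~\ref{vanishing_Bloch-Kato_Selmer}, and then trying to close the gap in the twist. You also correctly identify where the difficulty lies. But the mechanism you propose to close that gap --- a ``cyclotomic deformation argument along the cyclotomic sub-tower'' transferring vanishing of $\Hom_\Lambda(\mathcal{X}_T^\infty, V')$ to $\Hom_\Lambda(\mathcal{X}_T^\infty, V' \otimes \epsilon^m)$ --- is not a valid argument. For a finitely generated torsion $\Lambda$-module, the vanishing of $\Hom_\Lambda(\mathcal{X}_T^\infty, V)$ is sensitive to the specific Galois module $V$: changing $V$ by a twist of $\epsilon$ moves to a different $\Lambda$-module, and there is no ``rigidity'' principle that propagates vanishing across cyclotomic twists. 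Nothing in the paper (nor in the Iwasawa theory it cites) provides such a transfer.

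The paper's actual resolution is different and is the key idea you are missing. It writes $n = 2k + h$ with $k \geq 1$ \emph{odd} and $h \in \{0,1,2,3\}$, so that Theorem~\ref{vanishing_Bloch-Kato_Selmer} applied with parameter $k$ yields $H^1_f(G_{\Q,T}, \mathrm{Sym}^{2k} V_p(E) \otimes \epsilon^{-k}) = 0$ with \emph{no} $\delta_{F/\Q}$ twist (since $\delta_{F/\Q}^{k+1} = 1$ for $k$ odd). It then uses the roundtrip $\Sha^1 \leadsto H^2(\text{dual}) \leadsto \Sha^1$ via both parts of Lemma~\ref{lemma_equivalence} to replace $\mathrm{Sym}^n V_p(E) \otimes \epsilon^{-n}$ by $\mathrm{Sym}^{2k} V_p(E) \otimes \mathrm{Sym}^h V_p(E) \otimes \epsilon^{-n}$, which factors as $(\mathrm{Sym}^{2k} V_p(E) \otimes \epsilon^{-k}) \otimes (\mathrm{Sym}^h V_p(E) \otimes \epsilon^{-n+k})$. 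The decisive step is then a tensor-exchange argument: because $\mathcal{X}_T^\infty$ is a finitely presented $\Lambda$-module (it is finitely generated torsion over the Noetherian ring $\Lambda$) and $\mathrm{Sym}^h T_p(E) \otimes \epsilon^{-n+k}$ is $\Z_p$-flat, one can peel off the second factor to get $\Hom_\Lambda(\mathcal{X}_T^\infty, V) \cong \Hom_\Lambda(\mathcal{X}_T^\infty, V') \otimes \mathrm{Sym}^h V_p(E) \otimes \epsilon^{-n+k}$, which then vanishes because $\Hom_\Lambda(\mathcal{X}_T^\infty, V') \cong \Sha^1(G_{\Q,T}, V') \subset H^1_f(G_{\Q,T}, V') = 0$. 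This flatness/tensor trick --- not a cyclotomic deformation --- is what bridges the twist from $\epsilon^{-k}$ (controlled by Theorem~\ref{vanishing_Bloch-Kato_Selmer}) to $\epsilon^{-n}$ (the target). Your Clebsch--Gordan and $\wedge^2$ suggestions do not supply anything with the right $\epsilon$-exponent, and your parity split by odd/even $n$ does not line up with the parity structure ($k$ odd, $h$ small) that actually makes the twists match.
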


\begin{proof}
By part $(1)$ of Lemma \ref{lemma_equivalence} it is enough to prove that $H^2 (G_{\Q, T}, \mathrm{Sym}^{n} V_p(E) \otimes \epsilon) = 0$. Let us write $n = 2k + h$, where $k \geq 1$ is odd, and $h \in \set{0, 1, 2, 3}$. The natural surjection $\mathrm{Sym}^{2k} V_p(E) \otimes \mathrm{Sym}^{h} V_p(E) \ra \mathrm{Sym}^{n} V_p(E)$ induces a surjection
\[ \mathrm{Sym}^{2k} V_p(E) \otimes \mathrm{Sym}^{h} V_p(E) \otimes \epsilon \ra \mathrm{Sym}^{n} V_p(E) \otimes \epsilon \]
which is $G_{\Q, T}$-equivariant. Since $G_{\Q, T}$ has $p$-cohomological dimension $2$, we get a surjection
\[ H^2 (G_{\Q, T}, \mathrm{Sym}^{2k} V_p(E) \otimes \mathrm{Sym}^h V_p(E) \otimes \epsilon) \ra H^2 (G_{\Q, T}, \mathrm{Sym}^{n} V_p(E) \otimes \epsilon), \]
and so it is enough to prove that $H^2 (G_{\Q, T}, \mathrm{Sym}^{2k} V_p(E) \otimes \mathrm{Sym}^h V_p(E) \otimes \epsilon) = 0$. Moreover, since $\mathrm{Sym}^{2k} V_p(E) \otimes \mathrm{Sym}^h V_p(E) \otimes \epsilon^{-n}$ is pure of weight $n \neq 0$, by part $(2)$ of Lemma \ref{lemma_equivalence} it is enough to prove that $\Sha^1 (G_{\Q, T}, \mathrm{Sym}^{2k} V_p (E) \otimes \mathrm{Sym}^h V_p(E) \otimes \epsilon^{-n}) = 0$.

Let $V = \mathrm{Sym}^{2k} V_p (E) \otimes \mathrm{Sym}^h V_p(E) \otimes \epsilon^{-n}$ and $V' = \mathrm{Sym}^{2k} V_p (E) \otimes \epsilon^{-k}$, and let $\mathcal{V} = \mathrm{Sym}^{2k} T_p (E) \otimes \mathrm{Sym}^h T_p(E) \otimes \epsilon^{-n}$ and $\mathcal{V}' = \mathrm{Sym}^{2k} T_p (E) \otimes \epsilon^{-k}$ be $G_{\Q, T}$-stable $\Z_p$-lattices, so that $\mathcal{V} \cong \mathcal{V}' \otimes \mathrm{Sym}^h T_p(E) \otimes \epsilon^{-n+k}$. Since $V$ is pure of weight $n \neq 0$, and so it cannot contain $\Q_p$ as a $\GL(V_p(E))$-subrepresentation, and $V'$ is nontrivial and irreducible as a $\GL(V_p(E))$-representation, by Lemma \ref{Selmer_Iwasawa_lemma} we have that $\Sha^1 (G_{\Q, T}, V) \cong \Hom_{\Lambda} (\mathcal{X}_T^\infty, V)$ and $\Sha^1 (G_{\Q, T}, V') \cong \Hom_{\Lambda} (\mathcal{X}_T^\infty, V')$. Since $\Sha^1 (G_{\Q, T}, V') \subset H^1_f (G_{\Q, T}, V')$, by Theorem \ref{vanishing_Bloch-Kato_Selmer} we have that $\Sha^1 (G_{\Q, T}, V') = 0$, and hence that $\Hom_{\Lambda} (\mathcal{X}_T^\infty, V') = 0$.

On the other hand, since $\Q^\infty$ contains the cyclotomic $\Z_p$-extension of $\Q$, we have that $\mathcal{X}_T^\infty$ is a finitely generated torsion $\Lambda$-module by \cite[Lemma 3.4]{HS05}, and hence finitely presented, as $\Lambda$ is Noetherian. Moreover, $\mathrm{Sym}^h T_p(E) \otimes \epsilon^{-n+k}$ is $\Z_p$-flat, and so \cite[Proposition I.9.10]{bourbaki_commutative_algebra} gives us an isomorphism of $\Z$-modules $\Hom_{\Lambda} (\mathcal{X}_T^\infty, \mathcal{V}) \cong \Hom_{\Lambda} (\mathcal{X}_T^\infty, \mathcal{V}') \otimes \mathrm{Sym}^h T_p(E) \otimes \epsilon^{-n+k}$, which in turn induces an isomorphism $\Hom_{\Lambda} (\mathcal{X}_T^\infty, V) \cong \Hom_{\Lambda} (\mathcal{X}_T^\infty, V') \otimes \mathrm{Sym}^h V_p(E) \otimes \epsilon^{-n+k}$. It follows that $\Hom_{\Lambda} (\mathcal{X}_T^\infty, V) = 0$, hence that $\Sha^1 (G_{\Q, T}, V) = 0$.
\end{proof}

\begin{remark}
We remark that also the results of this section can be immediately extended to any totally real base field.
\end{remark}

\section{Proof of Theorem \ref{main_theorem}} \label{proof_main_theorem}

Let us start by proving the following lemma.

\begin{lemma} \label{lemma_quotient}
We have a $G_{\Q, T}$-equivariant isomorphism
\[ W^{\mathrm{\acute{e}t}, n+1} \backslash W^{\mathrm{\acute{e}t}, n} \cong \mathrm{Sym}^{n-2} V_p(E) \otimes \epsilon \]
for every $n \geq 2$.
\end{lemma}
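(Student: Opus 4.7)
The plan is to identify $U^{\mathrm{\acute{e}t}}$ explicitly as a free pro-unipotent group on $V_p(E)$, and then to compute the graded pieces of the descending central series of its maximal metabelian quotient by exploiting the fact that $V_p(E)$ is two-dimensional. Since $X(\C)$ is a once-punctured torus, its topological fundamental group is free of rank two, and hence $U^{\mathrm{\acute{e}t}}$ is the free $\Q_p$-pro-unipotent group on its abelianisation. The long exact sequence of the pair $(E, X)$ shows that $X \hookrightarrow E$ induces an isomorphism on $H_1$ (removing a single point does not affect $H_1$), so we obtain a $G_{\Q, T}$-equivariant identification
\[ U^{\mathrm{\acute{e}t}} / [U^{\mathrm{\acute{e}t}}, U^{\mathrm{\acute{e}t}}] \cong H_1^{\mathrm{\acute{e}t}}(X_{\alg{\Q}}, \Q_p) \cong V_p(E). \]

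Passing to Lie algebras, the Lie algebra of $W^{\mathrm{\acute{e}t}}$ is then the free metabelian pro-nilpotent Lie algebra $\mathfrak{m} = L(V) / [[L(V), L(V)], [L(V), L(V)]]$ on $V := V_p(E)$, with the natural grading identifying $W^{\mathrm{\acute{e}t}, n+1} \backslash W^{\mathrm{\acute{e}t}, n}$ with the degree-$n$ graded piece $\mathfrak{m}_n$. The derived subalgebra $\mathfrak{m}' := [\mathfrak{m}, \mathfrak{m}]$ is abelian by construction, so the adjoint action of $V$ on it factors through $\Sym V$, and $\mathfrak{m}'$ is generated as a $\Sym V$-module by $\mathfrak{m}_2 = \wedge^2 V$. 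The Jacobi identity on three elements of $V$ translates into the standard Koszul relation, giving an exact sequence of graded $\Sym V$-modules
\[ \Sym V \otimes \wedge^3 V \xrightarrow{\partial} \Sym V \otimes \wedge^2 V \twoheadrightarrow \mathfrak{m}' \to 0, \]
where $\partial$ is the Koszul differential $s \otimes (a \wedge b \wedge c) \mapsto sa \otimes (b \wedge c) - sb \otimes (a \wedge c) + sc \otimes (a \wedge b)$.

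The crucial simplification is that $V = V_p(E)$ has $\Q_p$-dimension $2$, so $\wedge^3 V = 0$ and the surjection above is an isomorphism. Taking the degree-$n$ piece yields a $G_{\Q, T}$-equivariant isomorphism $\mathfrak{m}_n \cong \Sym^{n-2} V_p(E) \otimes \wedge^2 V_p(E)$ for all $n \geq 2$. Combining with the Weil pairing isomorphism $\wedge^2 V_p(E) \cong \Q_p(1) = \epsilon$ of $G_\Q$-representations gives the claimed formula. The main obstacle is really book-keeping rather than ideas: one must check that the freeness of $U^{\mathrm{\acute{e}t}}$ on $V_p(E)$ and the Koszul presentation of the free metabelian Lie algebra are both stated $G_{\Q,T}$-equivariantly, but since both constructions are functorial in $V$ they transport the Galois action automatically.
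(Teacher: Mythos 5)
Your proof is correct, but it takes a genuinely different route from the paper's. Where you compute the graded pieces explicitly via the Koszul presentation of the free metabelian Lie algebra, noting that $\wedge^3 V_p(E) = 0$ forces the surjection $\Sym^{n-2}V_p(E)\otimes\wedge^2 V_p(E)\to\mathfrak{m}_n$ to be an isomorphism, the paper instead argues more minimally: it only shows that $W^{\mathrm{\acute{e}t},n+1}\backslash W^{\mathrm{\acute{e}t},n}$ is a \emph{quotient} of $\mathrm{Sym}^{n-2}V_p(E)\otimes\epsilon$ (using iterated brackets, symmetrization modulo $[\mathfrak{u}^{2},\mathfrak{u}^{2}]$, and the fact that the degree-$2$ piece is a quotient of $\wedge^2$), and then clinches the isomorphism by invoking that $\mathrm{Sym}^{n-2}V_p(E)\otimes\epsilon$ is \emph{irreducible} as a $G_{\Q,T}$-representation (this is where absence of CM and largeness of the image of $\rho_{E,p}$ enter) together with nonvanishing of the graded piece. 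The paper's irreducibility trick is attractive because it sidesteps any appeal to freeness of $U^{\mathrm{\acute{e}t}}$ or to the structure theory of free metabelian Lie algebras; yours is more structural, and in fact gives a direct proof of the nonvanishing that the paper's argument also tacitly needs.

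One point you should sharpen: the phrase ``$U^{\mathrm{\acute{e}t}}$ is the free $\Q_p$-pro-unipotent group on its abelianisation'' is only literally true at the level of the associated graded Lie algebra for the descending central series. As a $G_{\Q,T}$-object, $U^{\mathrm{\acute{e}t}}$ need not be the free pro-unipotent group on the Galois module $V_p(E)$ --- the successive extensions in the lower central series tower carry genuine arithmetic content. What is true, and what your argument actually uses, is that the iterated-bracket maps induce a $G_{\Q,T}$-equivariant isomorphism of graded Lie algebras $\gr^\bullet\mathfrak{u}^{\mathrm{\acute{e}t}}\cong L(V_p(E))$, because the abstract Lie algebra is free (so the maps are isomorphisms of vector spaces) and the Galois action is by filtered Lie algebra automorphisms. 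Since the lemma concerns only the graded pieces, this is sufficient, but it is worth stating the freeness claim at the graded level rather than for $U^{\mathrm{\acute{e}t}}$ itself. With that caveat, the Koszul computation and the Weil-pairing identification $\wedge^2 V_p(E)\cong\Q_p(1)$ complete the proof as you say.
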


\begin{proof}
Let us first of all prove that $W^{\mathrm{\acute{e}t}, n+1} \backslash W^{\mathrm{\acute{e}t}, n}$ is a quotient of $\mathrm{Sym}^{n-2} V_p(E) \otimes \epsilon$ as a $G_{\Q, T}$-module\footnote{The proof of this fact has been suggested to us by Minhyong Kim, which we thank once more.}.

Let $\mathfrak{u}^{\mathrm{\acute{e}t}}$ be the Lie algebra of $U^{\mathrm{\acute{e}t}}$, and let $\mathfrak{u}^{\mathrm{\acute{e}t}, n}$ be its descending central series. We have a natural $G_{\Q, T}$-equivariant isomorphism
\[ U^{\mathrm{\acute{e}t}, n+1} \backslash U^{\mathrm{\acute{e}t}, n} \cong \mathfrak{u}^{\mathrm{\acute{e}t}, n+1} \backslash \mathfrak{u}^{\mathrm{\acute{e}t}, n}. \]

Let $\mathfrak{u}_1^{\mathrm{\acute{e}t}} = \mathfrak{u}^{\mathrm{\acute{e}t}, 2} \backslash \mathfrak{u}^{\mathrm{\acute{e}t}}$, so that $\mathfrak{u}_1^{\mathrm{\acute{e}t}} \cong V_p (E)$. By induction on $n \geq 2$, using the identity $\mathfrak{u}^{\mathrm{\acute{e}t}, n+1} \backslash \mathfrak{u}^{\mathrm{\acute{e}t}, n} = [\mathfrak{u}_1^{\mathrm{\acute{e}t}}, \mathfrak{u}^{\mathrm{\acute{e}t}, n} \backslash \mathfrak{u}^{\mathrm{\acute{e}t}, n-1}]$, we see that $\mathfrak{u}^{\mathrm{\acute{e}t}, n+1} \backslash \mathfrak{u}^{\mathrm{\acute{e}t}, n}$ is generated as a $G_{\Q, T}$-module by elements of the form $[a_1, [a_2, [ \cdots [a_{n-2}, b] \cdots ]]]$, where $a_i \in \mathfrak{u}_1^{\mathrm{\acute{e}t}}$, and $b \in \mathfrak{u}^{\mathrm{\acute{e}t}, 3} \backslash \mathfrak{u}^{\mathrm{\acute{e}t}, 2}$. Note that as a $G_{\Q, T}$-module $\mathfrak{u}^{\mathrm{\acute{e}t}, 3} \backslash \mathfrak{u}^{\mathrm{\acute{e}t}, 2}$ is a quotient of $\wedge^2 \mathfrak{u}_1^{\mathrm{\acute{e}t}}$.

Let now $\mathfrak{w}^{\mathrm{\acute{e}t}}$ be the Lie algebra of $W^{\mathrm{\acute{e}t}}$, and let $\mathfrak{w}^{\mathrm{\acute{e}t}, n}$ be its descending central series. For every generator $[a_1, [a_2, [ \cdots [a_{n-2}, b] \cdots ]]]$ of $\mathfrak{u}^{\mathrm{\acute{e}t}, n+1} \backslash \mathfrak{u}^{\mathrm{\acute{e}t}, n}$ as above, we have that modulo $[\mathfrak{u}^{\mathrm{\acute{e}t}, 2}, \mathfrak{u}^{\mathrm{\acute{e}t}, 2}]$ the order of the $a_i$'s can be normalised to any fixed order. It follows that as a $G_{\Q, T}$-module
\[ W^{\mathrm{\acute{e}t}, n+1} \backslash W^{\mathrm{\acute{e}t}, n} \cong \mathfrak{w}^{\mathrm{\acute{e}t}, n+1} \backslash \mathfrak{w}^{\mathrm{\acute{e}t}, n} \]
is a quotient of
\[ \mathrm{Sym}^{n - 2} \mathfrak{u}_1^{\mathrm{\acute{e}t}} \otimes \wedge^2 \mathfrak{u}_1^{\mathrm{\acute{e}t}} \cong \mathrm{Sym}^{n-2} V_p(E) \otimes \epsilon. \]

Since $\mathrm{Sym}^{n-2} V_p(E) \otimes \epsilon$ is irreducible, and $W^{\mathrm{\acute{e}t}, n+1} \backslash W^{\mathrm{\acute{e}t}, n} \neq 0$, we conclude that as $G_{\Q, T}$-modules $W^{\mathrm{\acute{e}t}, n+1} \backslash W^{\mathrm{\acute{e}t}, n} \cong \mathrm{Sym}^{n-2} V_p(E) \otimes \epsilon$.
\end{proof}

Combining this with the results of the previous section we immediately get

\begin{corollary} \label{vanishing_H2}
We have
\[ H^2 (G_{\Q, T}, W^{\mathrm{\acute{e}t}, n+1} \backslash W^{\mathrm{\acute{e}t}, n}) = 0 \]
for all $n \geq 4$.
\end{corollary}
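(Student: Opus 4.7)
The plan is to combine Lemma \ref{lemma_quotient} with Theorem \ref{vanishing_Selmer}, using Lemma \ref{lemma_equivalence}(2) to convert a Selmer-group statement into an $H^2$-statement.

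Step one: apply Lemma \ref{lemma_quotient} to rewrite the claim as
\[ H^2 (G_{\Q, T}, \mathrm{Sym}^{n-2} V_p(E) \otimes \epsilon) = 0 \quad \text{for all } n \geq 4. \]

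Step two: identify the coefficient module as a Cartier dual. Setting $V = \mathrm{Sym}^{n-2} V_p(E) \otimes \epsilon^{-(n-2)}$, I would use the Weil-pairing isomorphism $V_p(E)^\vee \cong V_p(E) \otimes \epsilon^{-1}$, which gives $(\mathrm{Sym}^{n-2} V_p(E))^\vee \cong \mathrm{Sym}^{n-2} V_p(E) \otimes \epsilon^{-(n-2)}$, to compute
\[ V^D = V^\vee \otimes \epsilon \cong \mathrm{Sym}^{n-2} V_p(E) \otimes \epsilon. \]
The target of the vanishing thus becomes $H^2(G_{\Q, T}, V^D)$.

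Step three: apply Theorem \ref{vanishing_Selmer} to the exponent $n-2$, which for $n \geq 4$ is at least $2$; this gives $\Sha^1(G_{\Q, T}, V) = 0$. Then I would apply Lemma \ref{lemma_equivalence}(2) to pass from the Selmer-group vanishing to the $H^2$-vanishing for $V^D$. Its hypotheses are easily verified: $V$ is geometric, being a subquotient of tensor powers of $H^1_{\mathrm{\acute{e}t}}(E_{\alg{\Q}}, \Q_p)$, and it is pure of nonzero weight (the weight of $V_p(E)$ being $-1$, that of $V$ equals $n-2$, which is strictly positive throughout our range).

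I do not anticipate any real obstacle: all the substantive content has been established in the preceding two sections, and this step is essentially Cartier-duality bookkeeping. The only sanity check to make is matching the numerical range for $n$ in Theorem \ref{vanishing_Selmer} (namely $n - 2 \geq 2$) with the purity requirement of Lemma \ref{lemma_equivalence}(2), both of which demand exactly $n \geq 4$, consistent with the bound stated in the corollary.
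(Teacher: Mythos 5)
Your proof is correct and is essentially the argument the paper intends when it says ``combining this with the results of the previous section'': you apply Lemma \ref{lemma_quotient} to identify the coefficient module, check via the Weil pairing that $\mathrm{Sym}^{n-2} V_p(E) \otimes \epsilon$ is the Cartier dual of $V = \mathrm{Sym}^{n-2} V_p(E) \otimes \epsilon^{-(n-2)}$, and deduce the $H^2$-vanishing from Theorem \ref{vanishing_Selmer} together with Lemma \ref{lemma_equivalence}(2). One small remark: the route through Lemma \ref{lemma_equivalence}(2) is a slight boomerang, since the first reduction in the proof of Theorem \ref{vanishing_Selmer} already establishes $H^2(G_{\Q, T}, \mathrm{Sym}^{m} V_p(E) \otimes \epsilon) = 0$ for $m \geq 2$ as an intermediate step, so one could cite that directly; and the purity requirement of Lemma \ref{lemma_equivalence}(2) only forces $n \neq 2$, so the binding constraint $n \geq 4$ comes solely from Theorem \ref{vanishing_Selmer}.
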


We can finally prove our main result.

\begin{proof} [Proof of Theorem \ref{main_theorem}]
Assume that $n \geq 2$. For both the de Rham and the \'etale realisations, let us consider the exact sequence
\[ 0 \ra W^{n+1} \backslash W^n \ra W_{n} \ra W_{n-1} \ra 0. \]

In the de Rham realisation, we get that
\begin{align*}
 \dim (W_{n}^{\mathrm{dR}} / F^0 W_{n}^{\mathrm{dR}}) &- \dim (W_{n-1}^{\mathrm{dR}} / F^0 W_{n-1}^{\mathrm{dR}}) \\
 &= \dim (W^{\mathrm{dR}, n+1} \backslash W^{\mathrm{dR}, n}) - \dim F^0(W^{\mathrm{dR}, n+1} \backslash W^{\mathrm{dR}, n})
 \end{align*}
for $n \geq 2$. Following \cite[\S 3 and \S 4]{kim09} and \cite[\S 3]{kim10}, we get that
\[ \dim (W_{2}^{\mathrm{dR}} / F^0 W_{2}^{\mathrm{dR}}) = 2 \]
and that
\[ \dim F^0(W^{\mathrm{dR}, n+1} \backslash W^{\mathrm{dR}, n}) \leq 1 \]
for $n \geq 3$, so that
\[ \dim (W_{n}^{\mathrm{dR}} / F^0 W_{n}^{\mathrm{dR}}) \geq 3 + \frac{n(n-3)}{2} \]
for $n \geq 3$.

Let us now move to the \'etale side of the picture. First of all, we have that
\[ \dim H^1_f (G_{\Q, T}, W_{n}^{\mathrm{\acute{e}t}}) \! - \! \dim H^1_f (G_{\Q, T}, W_{n-1}^{\mathrm{\acute{e}t}}) \leq \dim H^1_f (G_{\Q, T}, W^{\mathrm{\acute{e}t}, n+1} \backslash W^{\mathrm{\acute{e}t}, n}) \]
for $n \geq 2$. Let now $s$ be the cardinality of $S$, and let $r = \dim H^1_f (G_{\Q, T}, V_p (E))$. With these notations, following \cite[\S 3]{kim10}, we get that
\[ \dim H^1_f (G_{\Q, T}, W_{2}^{\mathrm{\acute{e}t}}) \leq r + s - 1, \]
and hence that
\[ \dim H^1_f (G_{\Q, T}, W_{n}^{\mathrm{\acute{e}t}}) \leq r + s - 1 + \sum_{i = 3}^n h^1 (G_{\Q, T}, W^{\mathrm{\acute{e}t}, i+1} \backslash W^{\mathrm{\acute{e}t}, i}) \]
for $n \geq 3$. By Lemma \ref{lemma_quotient} we have a $G_{\Q, T}$-equivariant isomorphism $W^{\mathrm{\acute{e}t}, i+1} \backslash W^{\mathrm{\acute{e}t}, i} \cong \mathrm{Sym}^{i-2} V_p(E) \otimes \epsilon$ for every $i \geq 3$. Let $r' = h^1 (G_{\Q, T}, W^{\mathrm{\acute{e}t}, 4} \backslash W^{\mathrm{\acute{e}t}, 3}) = h^1 (G_{\Q, T}, V_p (E) \otimes \epsilon)$. For $i \geq 4$ the Euler characteristic formula gives that
\[ h^1 (G_{\Q, T}, W^{\mathrm{\acute{e}t}, i+1} \backslash W^{\mathrm{\acute{e}t}, i}) = h^2 (G_{\Q, T}, W^{\mathrm{\acute{e}t}, i+1} \backslash W^{\mathrm{\acute{e}t}, i}) + \dim (W^{\mathrm{\acute{e}t}, i+1} \backslash W^{\mathrm{\acute{e}t}, i})^{c=-1}, \]
where $c$ denotes the complex conjugation induced by $\infty$. By Corollary \ref{vanishing_H2} we have that
\[h^2 (G_{\Q, T}, W^{\mathrm{\acute{e}t}, i+1} \backslash W^{\mathrm{\acute{e}t}, i}) = 0. \]

Also, since $\mathrm{Sym}^{i-2} V_p(E)$ becomes automorphic over a finite totally real Galois extension of $\Q$ by \cite[Theorem 7.1.4]{BLGG11}, so does $\mathrm{Sym}^{i-2} V_p(E) \otimes \epsilon$, and so we have that
\[ \dim (W^{\mathrm{\acute{e}t}, i+1} \backslash W^{\mathrm{\acute{e}t}, i})^{c=-1} =  \left \lfloor \frac{i-1}{2} \right \rfloor \text{ or } \left \lceil \frac{i-1}{2} \right \rceil  \]
by \cite[Theorem 1.1]{CLH16}. Moreover, since $\mathrm{Sym}^{i-2} V_p(E) \otimes \epsilon$ has determinant $\epsilon^{i-1}$, and $\epsilon^{i-1} (c) = (-1)^{i-1}$, we deduce that
\[ \dim (W^{\mathrm{\acute{e}t}, i+1} \backslash W^{\mathrm{\acute{e}t}, i})^{c=-1} =  \left \lfloor \frac{i-1}{2} \right \rfloor + \begin{cases}
1 & \text{if } i \equiv 2 \pmod 4, \\
0 & \text{otherwise.}
\end{cases} \]

If $n$ is divisible by $4$, we then get that\footnote{We assume that $n$ is divisible by $4$ just to simplify the computations. Note that this assumption affects our final lower bound estimate only of an error term $\leq 3$.}
\[ \dim H^1_f (G_{\Q, T}, W_{n}^{\mathrm{\acute{e}t}}) \leq r + r' + s + n \left( \frac{n}{4} + 1 \right) + \frac{3n}{4}. \]

In conclusion, if we choose $n_0$ to be the smallest integer $n \geq 4$ divisible by $4$ and such that
\[ r + r' + s + n \left( \frac{n}{4} + 1 \right) + \frac{3n}{4} < 3 + \frac{n(n-3)}{2}, \]
we get that
\[ \dim H^1_f (G_{\Q, T}, W_n^{\mathrm{\acute{e}t}}) < \dim W_{n}^{\mathrm{dR}} / F^0 W_{n}^{\mathrm{dR}} \]
for every $n \geq n_0$ as required.
\end{proof}

\begin{remark}
Keep notations as in the proof of Theorem \ref{main_theorem}. We remark that the Block-Kato conjectures, see for instance \cite[\S 4.2.2]{FPR94}, predict $r$ to be equal to the analytic rank of $E$ (nevertheless, using the Kummer map, we always have that $r$ is greater than or equal to the algebraic rank of $E$), and a conjecture of Jannsen, see \cite[\S 6]{jannsen89}, predicts that $r' = 1$.
\end{remark}

\bibliographystyle{abbrv}
\bibliography{Bibliography.bib}

\end{document}